\def\author#1{\gdef\autrun{\def\and{\unskip, }#1}\gdef\@author{#1}}
 \newtheorem{theorem}{\bf Theorem}%[section]
 \newtheorem{lemma}[theorem]{\bf Lemma}
\begin{document}

\author{Simeon Ball and Bence Csajb\'ok\footnote{29 November 2017. The first author acknowledges the support of the project MTM2014-54745-P of the Spanish {\em Ministerio de Econom\'ia y Competitividad}. The second author is supported by the J\'anos Bolyai Research Scholarship of the Hungarian Academy of Sciences. The second author acknowledges the support of OTKA Grant No. K 124950.}}

\title{On sets of points with few odd secants }

\date{}

\maketitle

\begin{abstract}
We prove that, for $q$ odd, a set of $q+2$ points in the projective plane over the field with $q$ elements has at least $2q-c$ odd secants, where $c$ is a constant and an odd secant is a line incident with an odd number of points of the set.
\end{abstract}

\section{Introduction}

Let $\mathrm{PG}(2,q)$ denote the projective plane over ${\mathbb F}_q$, the finite field with $q$ elements. An odd secant to a set $S$ of points of $\mathrm{PG}(2,q)$ is a line of  $\mathrm{PG}(2,q)$ which is incident with an odd number of points of $S$. 
In \cite{BBFT2014}, P. Balister, B. Bollob{\' a}s, Z. F{\" u}redi and J. Thompson study sets of points which minimise the number of odd secants for a given cardinality. Among other things, they prove that if $|S| \leqslant q+1$ then $o(S)$, the number of odd secants, is at least $|S|(q+2-|S|)$ and that equality is achieved if and only if $S$ is chosen to be an arc, a set of points in which no three are collinear. They also provide a general lower bound for sets of size $q+2$ proving that if $q$ is odd and $|S|=q+2$ then $o(S) \geqslant \frac{3}{2}(q+1)$. This lower bound was subsequently improved to $o(S) \geqslant\frac{1}{5}(8q+12)$ for $q > 13$ in \cite[Theorem 6.17]{Csajbok2017}.

Conjecture 11 from \cite{BBFT2014} maintains that if $q$ is odd and $|S|=q+2$ then $o(S) \geqslant 2q-2$. Observe that a conic, together with an external point (a point incident with two tangents to the conic) is a set of $q+2$ points with $2q-2$ odd secants.
We will prove the following theorem, which proves Conjecture 11 from  \cite{BBFT2014}, up to a constant.

\begin{theorem} \label{main}
Let $S$ be a set of $q+2$ points in $\mathrm{PG}(2,q)$. If $q$ is odd then there is a constant $c$ such that $o(S) \geqslant 2q-c$.
\end{theorem}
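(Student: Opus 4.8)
The plan is to reduce Theorem~\ref{main} to a stability statement about $(q+1)$-point sets, by deleting a single well-chosen point of $S$.

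\emph{Deleting a point.} For $P\in S$ put $S_P:=S\setminus\{P\}$ and let $t_P$ be the number of lines through $P$ meeting $S$ in an even number of points; equivalently $t_P$ is the number of odd secants of $S_P$ through $P$. A line missing $P$ meets $S$ and $S_P$ in the same number of points, while a line through $P$ meets them in numbers of opposite parity, so adding up these contributions over all lines gives
\[
o(S)=o(S_P)+(q+1)-2t_P .
\]
Since only $q+1$ lines pass through $P$ we have $t_P\le q+1$, hence $o(S)\ge o(S_P)-(q+1)$ for every $P\in S$. So it is enough to find, for each $S$, one point $P$ with $o(S_P)$ roughly $3q$.

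\emph{Reduction to a stability statement.} The set $S_P$ has $q+1$ points, so the theorem of Balister--Bollob\'as--F\"uredi--Thompson gives $o(S_P)\ge q+1$, with equality exactly when $S_P$ is an arc and hence --- $q$ being odd, by Segre's theorem --- a conic. At most one of the $q+2$ sets $S_P$ can be a conic: two of them would coincide on the $q$ common points of $S$ and so would be equal (distinct conics meet in at most $4<q$ points), impossible since they omit different points of $S$. Thus, for $q\ge 5$, some $P\in S$ has $S_P$ not a conic, and the theorem will follow from the stability statement: \emph{there is an absolute constant $c_0$ such that every set $T$ of $q+1$ points of $\mathrm{PG}(2,q)$, $q$ odd, which is not a conic has $o(T)\ge 3q-c_0$}. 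Indeed this gives $o(S)\ge o(S_P)-(q+1)\ge 2q-(c_0+1)$, proving Theorem~\ref{main} with $c:=\max\{c_0+1,6\}$ (for $q=3$, the only odd $q$ not covered by $q\ge 5$, the bound $o(S)\ge 2q-c$ is vacuous). The stability statement is sharp up to the constant: a conic with one of its points moved off it is a non-conic $(q+1)$-set with about $3q$ odd secants.

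\emph{The main obstacle: the stability statement.} Its proof is the heart of the matter. One natural route: first show that a $(q+1)$-set $T$ with comparatively few odd secants is close to an arc --- writing $e_k$ for the number of lines meeting $T$ in exactly $k$ points, the identities $\sum_k e_k=q^2+q+1$, $\sum_k k e_k=(q+1)^2$ and $\sum_k \binom{k}{2} e_k=\binom{q+1}{2}$ combine to
\[
e_1=(q+1)+\sum_{k\ge 3}k(k-2)e_k ,
\]
so, since $o(T)\ge e_1$, a bound on $o(T)$ restricts the number and the length of the lines meeting $T$ in three or more points; deleting the points of $T$ on these lines leaves an arc, which for $q$ odd lies on a unique conic $C$, so that $T$ agrees with $C$ outside a bounded set. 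The hard remaining step is to show that such a $T\ne C$ already has about $3q$ odd secants; for this one uses Segre's lemma of tangents --- the tangent lines at a triangle of points of $T$ satisfy an algebraic identity which, once $T$ is known to be nearly a conic, pins the tangents of $T$ down to the tangents of $C$ and makes each point where $T$ differs from $C$ responsible for on the order of $q$ odd secants. Making all of this quantitative --- and in particular controlling how small $o(T)$ may be before the ``close to an arc'' step can be applied --- is exactly where the constant $c_0$, and hence $c$, is produced.
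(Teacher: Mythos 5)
Your reduction step is fine: the identity $o(S)=o(S_P)+(q+1)-2t_P$ is correct, at most one $S_P$ can be a conic, and so the theorem would indeed follow from your stability statement that every non-conic set $T$ of $q+1$ points has $o(T)\geqslant 3q-c_0$. But that statement is where essentially all of the difficulty of the theorem now lives, and you have not proved it; moreover, the route you sketch for it breaks down quantitatively. From the standard equations you correctly get $e_1=(q+1)+\sum_{k\geqslant 3}k(k-2)e_k$, but under the hypothesis $o(T)\leqslant 3q-c_0$ this only yields $\sum_{k\geqslant 3}k(k-2)e_k\leqslant 2q$, hence up to roughly $2q$ incidences between points of $T$ and lines meeting $T$ in at least three points. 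Since $|T|=q+1$, every point of $T$ could lie on such a line, so ``deleting the points of $T$ on these lines'' can leave an empty (or tiny) arc and no conic $C$ is determined. The counting argument localizes $T$ near an arc only when $o(T)$ is within an additive constant of the minimum $q+1$, not when it is allowed to be of order $3q$; and the subsequent step (that each point of $T\setminus C$ forces on the order of $q$ odd secants, without double counting) is also only asserted. So the proposal is not a proof but a reduction to an unproven --- and, as far as I can tell, open in this strength --- stability result.

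For comparison, the paper never passes to $(q+1)$-sets. It works directly on $S$: it introduces the weight $w(x)=\sum 1/|\ell\cap S|$ over odd secants $\ell$ through $x$, observes that $o(S)=\sum_{x\in S}w(x)$ and that the only points of weight below $2$ are the at most two points of weight $0$ (Lemma~\ref{szero}) and the points of $S_{4/3}$ (one tangent, one $3$-secant, $q-1$ bisecants). If $o(S)<2q-c$ then $|S_{4/3}|$ is large, and a scaled Segre lemma of tangents (Lemma~\ref{segre}) applied to a large independent subset $S'\subseteq S_{4/3}$ places $S'$ on an explicit curve of degree $6$ (Lemma~\ref{curve}); a sequence of degree-reduction lemmas forces most of $S_{4/3}$ onto a conic, and a concurrence lemma for the $3$-secants (Lemma~\ref{gsconcur}) then lets a final weight count recover $o(S)\geqslant 2q-c$ after all. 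Your deletion trick is an attractive alternative framing, but to complete it you would still need machinery of roughly this strength to prove the $(q+1)$-set stability claim.
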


In \cite[Theorem 3.2]{Vandendriessche2015}, Vandendriessche classifies those point sets $S$ which satisfy $|S| + o(S) \leqslant 2q$, for $q$ odd, apart from one possible example which was subsequently excluded in \cite[Proposition 6.19]{Csajbok2017}.

The number of secants of a set of $q+2$ points was first studied in \cite{BB1989} by A. Blokhuis and A. A. Bruen and refined later in \cite{seminuclear} by A. Blokhuis. The finite field Kakeya problem in the plane is to determine the minimal size of a Besicovich set, that is, an affine point set containing a line in every direction. By duality, this problem is equivalent to asking for the smallest number of lines meeting a set of $q+2$ points where one of the points (the point corresponding to the line at infinity) is incident only with bi-secants. Such points of a $(q+2)$-set are called internal nuclei, they were first studied by A. Bichara and G. Korchm\'aros in \cite{BK1982}, where Lemma~\ref{szero} was proven. In \cite{kakeya}, A.~Blokhuis and F.~Mazzocca proved that in $\mathrm{PG}(2,q)$, $q$ odd, the dual of a Besicovich set of minimal size is an irreducible conic together with an external point, the same configuration that we mentioned before. In the case $q$ is even there exist hyperovals in $\mathrm{PG}(2,q)$, i.e. arcs of size $(q+2)$. It is straightforward to see that hyperovals do not have odd secants. For stability results on point sets with few odd secants in $\mathrm{PG}(2,q)$, $q$ even, we refer to \cite{setofeven} by T. Sz\H{o}nyi and Zs. Weiner.

B. Segre used his celebrated lemma of tangents to prove that the set of tangents to an arc of size $q+2-t$ are contained in a curve of degree $d$ in the dual plane, where $t=d$ if $q$ is even, and $d=2t$ when $q$ is odd \cite{Segre1, Segre2}. In the paper \cite{untouchable}, by A. Blokhuis, A. Seress and H. A. Wilbrink, point sets without tangents were considered and Segre's technique was applied to associate curves to the "long secants", that is, lines meeting the set without tangents in more than two points. In the present paper we apply a coordinate-free, scaled variant of Segre's original argument, developed in \cite{Ball2012} and \cite{planararcs}, to point sets which admit both tangents and long secants.

\section{Sets of $q+2$ points with few odd secants.}

Let $S$ be a set of $q+2$ points in $\mathrm{PG}(2,q)$.

For $x \in S$, let the {\em weight} of $x$ be
$$
w(x) = \sum \frac{1}{| \ell \cap S|},
$$
where the sum is over the lines $\ell$ incident with $x$ and an odd number of points of $S$.

Let $S_{0}$ be the subset of $S$ of points which are incident with $q+1$ bi-secants, so the points of $S$ of weight zero.

The following lemma is from \cite{BK1982}.

\begin{lemma} \label{szero}
If $q$ is odd then $|S_0| \leqslant 2$.
\end{lemma}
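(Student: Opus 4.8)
The plan is to argue by contradiction. A point $n \in S_0$ has the property that each of the $q+1$ lines through it is a bi-secant, hence contains $n$ together with exactly one further point of $S$; since $|S\setminus\{n\}| = q+1$, these lines set up a bijection between the pencil through $n$ and the points of $S\setminus\{n\}$ (such an $n$ is an internal nucleus of $S$). Assume $S_0$ contains three points $n_1,n_2,n_3$. First I would note they are not collinear: the line $\langle n_1,n_2\rangle$ is a bi-secant because $n_1$ is an internal nucleus, so it contains no point of $S$ other than $n_1,n_2$, whence $n_3\notin\langle n_1,n_2\rangle$. Thus we may pick projective coordinates with $n_1=(1:0:0)$, $n_2=(0:1:0)$, $n_3=(0:0:1)$, and write $S\setminus\{n_1,n_2,n_3\}=\{p_1,\dots,p_{q-1}\}$.

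Next, for each nucleus I extract a statement about the coordinates of the $p_i$. Through $n_1=(1:0:0)$ the lines $Z=0$ and $Y=0$ already pass through $n_2$ and $n_3$; since $n_1$ is an internal nucleus, each of the remaining $q-1$ lines of the pencil through $n_1$ meets $S$ in $n_1$ and exactly one of the $p_i$. Hence no $p_i$ lies on $Y=0$ or $Z=0$, and the ratios $(y_i:z_i)$ are pairwise distinct. Normalising $p_i=(x_i:y_i:1)$, this says every $y_i\ne 0$ and the $y_i$ are pairwise distinct, so $\{y_1,\dots,y_{q-1}\}=\mathbb{F}_q^{*}$. Arguing in exactly the same way with $n_2$ gives every $x_i\ne 0$ and $\{x_1,\dots,x_{q-1}\}=\mathbb{F}_q^{*}$; and with $n_3$ gives that the $q-1$ ratios $x_i/y_i$ are pairwise distinct nonzero elements, hence $\{x_1/y_1,\dots,x_{q-1}/y_{q-1}\}=\mathbb{F}_q^{*}$.

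Finally I would invoke the identity $\prod_{a\in\mathbb{F}_q^{*}}a=-1$ (comparing constant terms in $X^{q-1}-1=\prod_{a\in\mathbb{F}_q^{*}}(X-a)$). Taking products over $i=1,\dots,q-1$ of the three conclusions above,
$$\prod_{i} x_i=-1,\qquad \prod_{i} y_i=-1,\qquad \prod_{i}\frac{x_i}{y_i}=-1 ,$$
while the last product is also $\bigl(\prod_i x_i\bigr)\big/\bigl(\prod_i y_i\bigr)=(-1)/(-1)=1$. So $1=-1$ in $\mathbb{F}_q$, contradicting that $q$ is odd. Therefore $|S_0|\le 2$.

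The argument is short and I do not expect a real obstacle; the point needing care is the combinatorial bookkeeping showing that through each nucleus it is precisely the $q-1$ lines avoiding the other two nuclei that carry the points $p_1,\dots,p_{q-1}$ one apiece. This is what upgrades each coordinate family from merely injecting into $\mathbb{F}_q^{*}$ to exhausting it, and the exhaustion is exactly what makes the Wilson-type product identities clash.
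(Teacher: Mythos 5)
Your proof is correct and is essentially the same as the paper's: both arguments take three putative points of $S_0$ as a basis, observe that for each of them the lines to the remaining $q-1$ points of $S$ sweep out all $q-1$ lines of the pencil avoiding the other two basis points (so the corresponding coordinate ratios exhaust $\mathbb{F}_q^{*}$), and then multiply the three resulting instances of $\prod_{a\in\mathbb{F}_q^{*}}a=-1$ to get $1=-1$. Your write-up merely makes explicit the non-collinearity of the three nuclei and the normalisation $p_i=(x_i:y_i:1)$, details the paper leaves implicit.
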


\begin{proof}
Suppose that $\{x,y,z\}$ are three points of $S_0$. Then $\{x,y,z\}$ is a basis. With respect to this basis, the line joining $x$ to $s=(s_1,s_2,s_3)$ is $\ker (s_3X_2-s_2X_3)$. As we vary the point $s \in S \setminus \{x,y,z \}$, we obtain every line $\ker (X_2+aX_3)$, for each non-zero $a \in {\mathbb F}_q$ exactly once, so we have
$$
\prod_{s \in S \setminus \{x,y,z \}} \frac{-s_2}{s_3}=-1.
$$
Similarly, from the points $y$ and $z$ we obtain,
$$
\prod_{s \in S \setminus \{x,y,z \}} \frac{-s_3}{s_1}=-1
$$
and
$$
\prod_{s \in S \setminus \{x,y,z \}} \frac{-s_1}{s_2}=-1.
$$
Multiplying these three expressions together, we have that $1=-1$.
\end{proof}

Let $S_{4/3}$ be the subset of $S$ of points which are incident with precisely one tangent and one $3$-secant and $q-1$ bi-secants, so the points of $S$ of weight $\frac{4}{3}$.

\begin{lemma} \label{notconstant}
If $|S_{4/3}|\leq c/2$ for some constant $c$, then the number of odd secants to $S$ is at least $2q-c$.
\end{lemma}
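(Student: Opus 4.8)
The idea is a weighted double-counting argument. We have $q+2$ points, and we want to bound the number of odd secants $o(S)$ from below. Each point $x \in S$ has a weight $w(x)$, defined as the sum of $1/|\ell \cap S|$ over odd secants $\ell$ through $x$. Summing the weights over all points of $S$, an odd secant $\ell$ with $|\ell \cap S| = k$ (k odd) contributes $k \cdot (1/k) = 1$ to the total, so $\sum_{x \in S} w(x) = o(S)$. Hence it suffices to show that $\sum_{x \in S} w(x) \geq 2q - c$, i.e. that the weights are, on average, close to $2$.

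The plan is to understand which points can have small weight. A point $x$ lies on $q+1$ lines, each meeting $S$ in at least $2$ points (counting $x$), with the total number of other points on these lines being $q+1$ (since $|S \setminus \{x\}| = q+1$). If $x$ lies on $t$ tangents and the remaining lines are bi-secants, then $t + 2(q+1-t)/\!\!$... more carefully: if the line-type distribution through $x$ has $a_i$ lines meeting $S$ in $i+1$ points other counting, then $\sum a_i = q+1$ and $\sum i \cdot a_i = q+1$. The weight is $w(x) = \sum_{i \text{ even}} a_i/(i+1)$ (odd secants are those with $i+1$ odd, i.e. $i$ even). When $x$ has only bi-secants, $w(x) = 0$; by Lemma \ref{szero} there are at most $2$ such points. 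The next smallest weights come from points on exactly one tangent: such a point must have at least one other odd secant to balance parities, and the minimal such configuration is one tangent plus one $3$-secant, giving $w(x) = 1 + 1/3 = 4/3$ — these are exactly the points of $S_{4/3}$. Every other point should be shown to have weight $\geq 2$. The key arithmetic fact to verify: if $x \notin S_0 \cup S_{4/3}$, then $w(x) \geq 2$. This requires checking that any parity-consistent line distribution through $x$ not of the two excluded types forces $\sum_{i \text{ even}} a_i/(i+1) \geq 2$; for instance, $2$ tangents give weight $\geq 2$ immediately, while $1$ tangent with the smallest companion odd secant being a $5$-secant or larger still gives $1 + 1/5 + \cdots$, and one must rule out weight strictly between $4/3$ and $2$ — the only candidates are $1 + 1/3 = 4/3$ and configurations like $1 + 1/3 + 1/3$ which already exceed... no wait, $1 + 1/3 + 1/3 = 5/3 < 2$, so one must argue such a point (one tangent, two $3$-secants) cannot occur, or handle it; here I would lean on a parity/counting constraint or fold a bounded number of such points into the constant.

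Putting it together: let $c$ be chosen so that $|S_{4/3}| \leq c/2$ (the hypothesis). Then
$$
o(S) = \sum_{x \in S} w(x) = \sum_{x \in S_{4/3}} w(x) + \sum_{x \notin S_{4/3}} w(x) \geq \tfrac{4}{3}|S_{4/3}| + 2\bigl(q+2 - |S_0| - |S_{4/3}| \bigr),
$$
where I use $w(x) \geq 2$ for $x \notin S_0 \cup S_{4/3}$, $w(x) = 0$ on $S_0$, and $|S_0| \leq 2$. This gives $o(S) \geq 2(q+2) - 4 - \tfrac{2}{3}|S_{4/3}| = 2q - \tfrac{2}{3}|S_{4/3}| \geq 2q - \tfrac{2}{3} \cdot \tfrac{c}{2} = 2q - \tfrac{c}{3} \geq 2q - c$. (One should absorb the finitely many possible exceptional weight-$\tfrac{5}{3}$ points, if any, into the constant as well.)

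The main obstacle I expect is the combinatorial case analysis establishing $w(x) \geq 2$ for all points outside $S_0 \cup S_{4/3}$ — in particular ruling out (or absorbing) points whose odd-secant pattern through $x$ sums to something in the open interval $(\tfrac{4}{3}, 2)$, such as one tangent plus two or more $3$-secants. This is where one needs a genuine geometric or parity argument rather than pure arithmetic, since arithmetically such weights are possible; presumably a counting constraint relating the number of $3$-secants through $x$ to the global structure of $S$, or simply the observation that only boundedly many points of each such intermediate type can exist (which again feeds the constant $c$), closes the gap.
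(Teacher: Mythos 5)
Your overall strategy is exactly the paper's: write $o(S)=\sum_{x\in S}w(x)$, show $w(x)\geqslant 2$ for every $x\notin S_0\cup S_{4/3}$, and combine this with $|S_0|\leqslant 2$ and the hypothesis on $|S_{4/3}|$; your closing arithmetic is fine (indeed slightly stronger than needed). However, the one substantive step --- that every $x\notin S_0\cup S_{4/3}$ has $w(x)\geqslant 2$ --- is left open. You explicitly flag configurations such as one tangent plus two $3$-secants (weight $5/3$) and propose to either ``lean on a parity/counting constraint'' or ``fold a bounded number of such points into the constant'', without supplying either. As written this is a genuine gap, and the second fallback is not available: nothing in your argument bounds the number of such points, so they cannot be absorbed into $c$.

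The fix is already contained in the two identities you wrote down. With $a_i$ denoting the number of lines through $x$ meeting $S\setminus\{x\}$ in exactly $i$ points, you have $\sum_i a_i=q+1$ (lines through $x$) and $\sum_i ia_i=q+1$ (points of $S\setminus\{x\}$, since $|S|=q+2$); subtracting gives
$$
a_0=\sum_{i\geqslant 2}(i-1)a_i ,
$$
so the number of tangents at $x$ equals the total excess of the long secants, and in particular is at least the number of $(\geqslant 3)$-secants through $x$. Hence: no tangent forces every line through $x$ to be a bi-secant, i.e.\ $x\in S_0$; exactly one tangent forces exactly one long secant, necessarily a $3$-secant, i.e.\ $x\in S_{4/3}$ with weight $\tfrac{4}{3}$; and in every other case there are at least two tangents, so $w(x)\geqslant 2$ from the tangents alone. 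The intermediate configurations you worried about (one tangent and two or more $3$-secants, or a $5$-secant with a single tangent, etc.) simply cannot occur. With this one-line observation inserted, your proof is complete and coincides with the paper's.
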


\begin{proof}
The number of odd secants is $\sum_{x \in S} w(x).$ By Lemma~\ref{szero}, $|S_0| \leqslant 2$ and for all $x \in S \setminus (S_0 \cup S_{4/3})$, the weight of $x$ is at least $2$. 
Then $\sum_{x \in S} w(x) \geq (q-c/2)2$.
\end{proof}

For each point $x \in S_{4/3}$, let $f_x$ denote the linear form whose kernel is the tangent to $S$ at $x$ and let $g_x$ denote the linear form whose kernel is the $3$-secant to $S$ at $x$.

The $3$-secants meeting $S_{4/3}$ partition $S_{4/3}$ into at least $\frac{1}{3}|S_{4/3}|$ parts, where two points are in the same partition if and only if they are joined by a $3$-secant.

Assuming that $S$ has less than $2q-c$ odd secants, for some constant $c$, Lemma~\ref{notconstant} implies that $|S_{4/3}|$ is more than a constant and so there is a subset $S'$ of $S_{4/3}$ with more than a constant number of points, with the property that if $x$ and $y$ are in $S'$ then the line joining $x$ and $y$ is not incident with any other point of $S$, i.e. where $S'$ contains at most one point from each part of the partition by $3$-secants.

Fix an element $e \in S'$ and scale $f_x$ so that $f_x(e)=f_e(x)$ for every $x\in S'$. Similarly, scale $g_x$ so that $g_x(e)=g_e(x)$. From now on, each point of the plane will be represented by a fixed vector of the underlying $3$-dimensional vector space (and not by a one-dimensional subspace).

\begin{lemma} \label{segre}
For all $x,y \in S' $,
$$
f_x(y)g_y(x)=-f_y(x)g_x(y).
$$
\end{lemma}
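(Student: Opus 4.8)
The plan is to reproduce Segre's lemma of tangents in the scaled, coordinate-free form, applied to the triangle of points $e, x, y$ in $S'$. The key point is that for any three non-collinear points of $S$, each incident only with tangents and bi-secants through the pencil structure, one can compute a product over the remaining $q+2-3 = q-1$ points of $S$ of certain ratios of linear forms, and this product must equal $-1$ because as $s$ ranges over $S \setminus \{e,x,y\}$ the lines through (say) $e$ sweep out each non-tangent, non-side line through $e$ exactly once — exactly as in the proof of Lemma~\ref{szero}, except now $e$, $x$, $y$ each carry one tangent and one $3$-secant rather than being of weight zero.

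First I would set up coordinates with basis $\{e, x, y\}$ and, for a point $z$ among these three, express the line joining $z$ to a variable point $s \in S$ in terms of the two coordinate functions vanishing at the other two basis points. Running $s$ over $S \setminus \{e,x,y\}$, the product of the corresponding slopes equals the product of all nonzero field elements, which is $-1$, \emph{after} correcting for the two special lines through $z$: its tangent $f_z$ and its $3$-secant $g_z$. The $3$-secant through $z$ meets $S$ in two further points; since $S' $ was chosen to contain at most one point per $3$-secant part, those two further points lie in $S \setminus \{e,x,y\}$ when $z \in \{e,x,y\}$ only if... — here one must be slightly careful, but the essential mechanism is that the tangent and the $3$-secant each contribute a correction factor, and crucially the $3$-secant through $z$ is not a side of the triangle because $e,x,y$ are pairwise joined by lines meeting no other point of $S$. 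Writing out the three products (one per vertex), multiplying them together, the generic factors telescope to $(-1)^3 = -1$ times a product of the correction terms built from $f_e, f_x, f_y, g_e, g_x, g_y$ evaluated at the opposite vertices; comparing with the analogous clean computation for an arc yields the stated identity $f_x(y)g_y(x) = -f_y(x)g_x(y)$ once the normalizations $f_x(e)=f_e(x)$, $g_x(e)=g_e(x)$ are used to cancel everything involving $e$.

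Concretely, I expect the argument to run as follows. Fix $x, y \in S'$ and apply the Segre product identity to the triangle $e, x, y$: the vertex $e$ contributes a factor involving $f_e, g_e$ and the coordinates of $x$ and $y$; the vertex $x$ contributes $f_x(y), g_x(y)$ and the $e$-coordinates; the vertex $y$ contributes $f_y(x), g_y(x)$ and the $e$-coordinates. The product of all three equals $-1$ (or $+1$, depending on bookkeeping of signs and of whether a $3$-secant genuinely passes through two new points versus possibly through $e$), and a parallel application with the roles symmetric, or simply clearing the $e$-dependent factors using $f_x(e)=f_e(x)$ and $g_x(e)=g_e(x)$, isolates the relation between the four quantities $f_x(y), g_y(x), f_y(x), g_x(y)$. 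The sign $-1$ on the right-hand side is exactly the parity discrepancy between a tangent (one point) and a $3$-secant (three points) in the counting.

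The main obstacle I anticipate is the bookkeeping of the special lines through each vertex: I must be certain that, for $z \in \{e,x,y\}$, the tangent $f_z$ and the $3$-secant $g_z$ are the \emph{only} lines through $z$ that are not bi-secants, and that neither of them is a side of the triangle $e x y$ — the latter is guaranteed precisely by the defining property of $S'$ (any two of its points are joined by a line meeting no third point of $S$, so in particular that line is a bi-secant, not a tangent or $3$-secant). The second delicate point is that the two "extra" points on the $3$-secant through $z$ must be accounted for correctly in the slope product, since they are points of $S$ but the $3$-secant itself is being excluded/corrected; this determines whether the correction factor is $g_z(\cdot)$ to the first power or whether a square appears, and hence pins down the exact form of the identity. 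Once these two points are handled, the remainder is the standard telescoping of the generic factors, which is routine.
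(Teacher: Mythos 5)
Your proposal follows essentially the same route as the paper's proof: write the pencil of lines through each vertex of the triangle $\{x,y,e\}$ in coordinates, note that the product of slopes over $s\in S\setminus\{x,y,e\}$ picks up each bi-secant once, the $3$-secant twice and the tangent not at all (the sides of the triangle being bi-secants by the choice of $S'$), multiply the three resulting identities and cancel the $e$-terms via the normalisations $f_x(e)=f_e(x)$, $g_x(e)=g_e(x)$. The sign bookkeeping you leave open does resolve to the stated $-1$, exactly as in the paper.
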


\begin{proof}
Fix a basis of the vector space so that with respect to the basis $x=(1,0,0)$, $y=(0,1,0) $ and $e=(0,0,1)$. The line joining $x$ to $s=(s_1,s_2,s_3)$ is $\ker (s_3X_2-s_2X_3)$. Since $x \in S_{4/3}$, as we vary the point $s \in S \setminus \{x,y,e \}$, we obtain every line $\ker (X_2+aX_3)$, for each non-zero $a \in {\mathbb F}_q$ exactly once, except the line $\ker f_x$ which does not occur, and the line $\ker g_x$ which occurs twice. Now $g_x(X)=g_x(y)X_2+g_x(e)X_3$ and $f_x(X)=f_x(y)X_2+f_x(e)X_3$, so we have
$$
\frac{g_x(y)}{g_x(e)}\frac{f_x(e)}{f_x(y)}\prod_{s \in S \setminus \{x,y,e \}} \frac{-s_2}{s_3}=-1.
$$
Similarly, from the points $y$ and $e$ we obtain,
$$
\frac{g_y(e)}{g_y(x)}\frac{f_y(x)}{f_y(e)}\prod_{s \in S \setminus \{x,y,e \}} \frac{-s_3}{s_1}=-1
$$
and
$$
\frac{g_e(x)}{g_e(y)}\frac{f_e(y)}{f_e(x)}\prod_{s \in S \setminus \{x,y,e \}} \frac{-s_1}{s_2}=-1.
$$
Multiplying these three expressions together, we have that
$$
g_x(y)f_x(e)g_y(e)f_y(x)g_e(x)f_e(y)=-g_x(e)f_x(y)g_y(x)f_y(e)g_e(y)f_e(x).
$$
The lemma now follows, since $f_e(x)=f_x(e)$, etc.
\end{proof}

For $x,y \in S'$, let
$$
b_{xy}(X)=f_x(X)g_y(X)-g_x(X)f_y(X).
$$

We will need the following two identities which easily follow from the definition of $b_{xy}(X)$. 

\begin{lemma}\label{needitlater}
	For all $x,y,z\in S'$,
\begin{equation}
\label{id1}
g_x(X)b_{yz}(X)+g_y(X)b_{zx}(X)+g_z(X)b_{xy}(X)=0.
\end{equation}
	For all $x,y,z,s \in S'$,
\begin{equation}
\label{id2}
b_{xs}(X)b_{yz}(X)+ b_{ys}(X)b_{zx}(X)+b_{zs}(X)b_{xy}(X)=0.
\end{equation} 
\end{lemma}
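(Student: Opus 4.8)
The plan is to verify both identities by direct formal expansion from the definition $b_{xy}(X)=f_x(X)g_y(X)-g_x(X)f_y(X)$; nothing about $S'$, tangents, or secants is used, and the only care needed is in the bookkeeping, which I will organise so the cancellations are visible.

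For \eqref{id1}, I would substitute $b_{yz}=f_yg_z-g_yf_z$ and its cyclic shifts into the left-hand side and expand the three products $g_xb_{yz}$, $g_yb_{zx}$, $g_zb_{xy}$ (suppressing the argument $X$ throughout). This produces six cubic monomials of the shape $f_\bullet g_\bullet g_\bullet$, and they cancel in pairs. The cleanest way to record this — and to avoid any sign slip — is the observation that for an \emph{arbitrary} linear form $h$,
\[
h_xb_{yz}+h_yb_{zx}+h_zb_{xy}=\det\begin{pmatrix} h_x & f_x & g_x \\ h_y & f_y & g_y \\ h_z & f_z & g_z\end{pmatrix},
\]
which is just the cofactor expansion of the determinant along its first column. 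Putting $h=g$ makes the first and third columns equal, so the determinant vanishes: this is \eqref{id1}. Putting $h=f$ gives the companion identity $f_xb_{yz}+f_yb_{zx}+f_zb_{xy}=0$, which I also record.

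For \eqref{id2} the quickest route is to combine the two instances of \eqref{id1} just obtained: multiply the $g$-version by $f_s(X)$, multiply the $f$-version by $g_s(X)$, and subtract. The leading coefficients collapse, since $f_s(X)g_x(X)-g_s(X)f_x(X)=b_{sx}(X)$ and similarly for $y$ and $z$, which gives $b_{sx}b_{yz}+b_{sy}b_{zx}+b_{sz}b_{xy}=0$; applying the antisymmetry $b_{sx}=-b_{xs}$ to all three factors turns this into precisely \eqref{id2}. (Equivalently, \eqref{id2} is the three-term Grassmann--Pl\"ucker relation among the $2\times 2$ minors $b_{xy}=\det\big((f_x,g_x),(f_y,g_y)\big)$, valid because any three vectors in a two-dimensional space are linearly dependent.)

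The lemma carries no genuine obstacle — it is entirely routine algebra — so the only point to watch is the sign conventions, in particular the antisymmetry of $b$ in its two subscripts when passing from the $b_{sx}$ form to the $b_{xs}$ form at the end of the argument for \eqref{id2}.
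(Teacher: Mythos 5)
Your proof is correct and matches the paper's (implicit) approach: the paper states that both identities ``easily follow from the definition of $b_{xy}(X)$'' and omits the verification, which is exactly the direct expansion you carry out. Your determinant formulation of \eqref{id1} and the Grassmann--Pl\"ucker reading of \eqref{id2} are a clean way to organise that routine computation, and the sign bookkeeping (in particular $b_{sx}=-b_{xs}$) is handled correctly.
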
\qed

%\begin{lemma} \label{threebs}
%Suppose $x,y,z \in S'$. If $q$ is odd and $b_{xy}(z)=0$ and $b_{xz}(y)=0$ then $b_{zy}(x) \neq 0$.
%\end{lemma}

%\begin{proof}
%By applying Lemma~\ref{segre} to $b_{xz}(y)=0$ once and $b_{xy}(z)=0$ twice we get
%$$f_y(x)g_z(y)=-g_y(x)f_z(y)$$ 
%and 
%$$f_z(x)g_z(y)=g_z(x)f_z(y).$$
%Combining these two equations we get $g_z(x)f_y(x)=-f_z(x)g_y(x)$. If $b_{zy}(x)=0$ then we have that $2g_z(x)f_y(x)=0$, which it is not.
%\end{proof}

\begin{lemma} \label{gfunc}
For all $x,y,z,w \in S'$, 
$$
\det(w,x,z)g_w(y)g_z(w)b_{yx}(w)=\det(w,x,y)g_w(z)g_y(w)b_{zx}(w).
$$
\end{lemma}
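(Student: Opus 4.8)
The plan is to use the Segre relation of Lemma~\ref{segre} to replace the four forms $f_x,g_x,f_y,g_y$, each evaluated at $w$, by the single pair $f_w,g_w$ (the tangent and $3$-secant at $w$) evaluated at $x,y,z$, after which both sides of the identity collapse to the same expression. If two of $w,x,y,z$ coincide then both sides vanish (a determinant acquires a repeated column, or one of $g_w(y),g_w(z),b_{yy}(w),\dots$ is $0$), so I may assume the four points are distinct. Since the line through two points of $S'$ is a bisecant, none of these four points lies on a tangent or a $3$-secant at another and no three of them are collinear, so the quantities occurring in denominators below ($\det(w,x,y)$, $\det(w,x,z)$, $f_w(x)$, $f_w(y)$, $f_w(z)$, $f_x(w)$) are all non-zero.

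First I would rewrite $b_{yx}(w)=f_y(w)g_x(w)-g_y(w)f_x(w)$ using Lemma~\ref{segre} for the pairs $(x,w)$ and $(y,w)$, which give $g_x(w)=-f_x(w)g_w(x)/f_w(x)$ and $g_y(w)=-f_y(w)g_w(y)/f_w(y)$. Substituting and collecting terms yields
$$
b_{yx}(w)=\frac{f_x(w)f_y(w)}{f_w(x)f_w(y)}\bigl(f_w(x)g_w(y)-f_w(y)g_w(x)\bigr),
$$
and the analogous formula with $y$ replaced by $z$. The gain is that the parenthesised factor now involves only $f_w$ and $g_w$.

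Now put $\delta(v)=f_w(x)g_w(v)-g_w(x)f_w(v)$, a linear form in $v$. It vanishes at $v=w$ (because $f_w(w)=g_w(w)=0$) and at $v=x$ (directly from the formula), and since $w\ne x$ the linear forms vanishing at $w$ and $x$ form a one-dimensional space containing $v\mapsto\det(w,x,v)$; hence $\delta(v)=c\,\det(w,x,v)$ for a scalar $c$, non-zero because $f_w(x)\ne0$ and $f_w,g_w$ are independent. Feeding the two displayed formulas for $b_{yx}(w),b_{zx}(w)$ together with $\delta(y)=c\,\det(w,x,y)$ and $\delta(z)=c\,\det(w,x,z)$ into the two sides of Lemma~\ref{gfunc} and cancelling the common factor $c\,f_x(w)\det(w,x,y)\det(w,x,z)/f_w(x)$, the identity reduces to
$$
\frac{g_w(y)f_y(w)}{f_w(y)}\,g_z(w)=\frac{g_w(z)f_z(w)}{f_w(z)}\,g_y(w),
$$
and a final application of Lemma~\ref{segre} for the pairs $(y,w)$ and $(z,w)$, in the form $g_w(y)f_y(w)/f_w(y)=-g_y(w)$, turns both sides into $-g_y(w)g_z(w)$. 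Equivalently, $g_w(y)b_{yx}(w)\big/\bigl(\det(w,x,y)g_y(w)\bigr)$ equals $-c\,f_x(w)/f_w(x)$ for every third point $y\in S'\setminus\{w,x\}$, and Lemma~\ref{gfunc} is exactly the equality of this value at $y$ and at $z$.

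The one step of insight is the first one: recognising that Lemma~\ref{segre} should be applied several times so as to express everything through the single pencil of lines at $w$. Once that is done, the determinants enter only because $\det(w,x,\cdot)$ and $\delta$ are, up to a scalar, both the equation of the bisecant $wx$, and the identity becomes formal. (One could instead hope to obtain Lemma~\ref{gfunc} from the quadratic identities \eqref{id1}--\eqref{id2}, but reconciling those with the determinantal coefficients appears to require the same Segre manipulation anyway.)
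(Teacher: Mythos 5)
Your proof is correct and is essentially the paper's argument carried out in the opposite order: the paper first interpolates the linear forms $f_w$ and $g_w$ at $x,y,z$, sets $X=w$, and eliminates $\det(w,y,z)$ — which is precisely your observation that $f_w(x)g_w(\cdot)-g_w(x)f_w(\cdot)$ is a scalar multiple of $\det(w,x,\cdot)$ — and only then applies Lemma~\ref{segre}, whereas you apply Lemma~\ref{segre} first and do the linear algebra second. The two ingredients (the Segre relation and the proportionality of linear forms vanishing at $w$ and $x$) are the same, so no genuinely new idea is involved, though your explicit check of the degenerate cases and non-vanishing of denominators is a welcome bit of extra care.
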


\begin{proof}
Since $f_w(X)$ is a linear form, it is determined by its value at three points. Hence,
$$
\det(x,y,z)f_w(X)=f_w(x)\det(X,y,z)+f_w(y)\det(x,X,z)+f_w(z)\det(x,y,X).
$$
Since $f_w(w)=0$,
$$
f_w(x)\det(w,y,z)+f_w(y)\det(x,w,z)+f_w(z)\det(x,y,w)=0.
$$
Eliminating $\det(w,y,z)$ from this equation by, using the similar equation for $g_w$, we get
$$
(g_w(x)f_w(y)-f_w(x)g_w(y))\det(x,w,z)+(g_w(x)f_w(z)-f_w(x)g_w(z))\det(x,y,w)=0
$$
By Lemma~\ref{segre},
$$
g_w(x)f_w(y)=g_w(y)f_w(x)\frac{f_y(w)}{g_y(w)}\frac{g_x(w)}{f_x(w)}
$$
and
$$
g_w(x)f_w(z)=g_w(z)f_w(x)\frac{f_z(w)}{g_z(w)}\frac{g_x(w)}{f_x(w)}.
$$
Combining these with the previous equation we get the required equation.
\end{proof}

\begin{lemma} \label{boxlemma}
For all $x,y,z \in S'$, $b_{xy}(z) \neq 0$.
\end{lemma}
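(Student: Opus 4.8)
The plan is to argue by contradiction. Suppose $x,y,z\in S'$ are distinct and $b_{xy}(z)=0$; I will show this forces the tangent and the $3$-secant to $S$ at $z$ to be the same line, which is impossible. First I would record the non-vanishing statements that the definition of $S'$ supplies: since the line through any two points of $S'$ is incident with no further point of $S$, for distinct $a,b\in S'$ we have $f_a(b)\ne0$ (the tangent at $a$ is incident only with $a$) and $g_a(b)\ne0$ (otherwise the $3$-secant at $a$ would be the bisecant $ab$), and for distinct $a,b,c\in S'$ we have $\det(a,b,c)\ne0$ (no three points of $S'$ are collinear). So in everything below only the $b$-terms can vanish.

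The heart of the matter is to propagate the single equation $b_{xy}(z)=0$ into many. Since $b_{yx}=-b_{xy}$ we also have $b_{yx}(z)=0$, and I would then apply Lemma~\ref{gfunc} with the point $w$ of that lemma specialised to $z$ (so that its $b$-terms are evaluated at $z$) and with the point $z$ of that lemma left free, equal to an arbitrary $c\in S'$. The left-hand side of the identity then vanishes, and cancelling the nonzero factors $\det(z,x,y)$, $g_y(z)$ and (for $c\ne z$) $g_z(c)$ leaves $b_{cx}(z)=0$ for every $c\in S'$. Unwinding the definition, $f_c(z)g_x(z)=g_c(z)f_x(z)$ for all $c\in S'\setminus\{z\}$, so dividing by $g_x(z)\ne0$ gives $f_c(z)=\mu\,g_c(z)$, where $\mu:=f_x(z)/g_x(z)\ne0$ is a scalar not depending on $c$.

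To finish, substitute $f_c(z)=\mu\,g_c(z)$ into Lemma~\ref{segre} applied to the pair $c,z$, and cancel the nonzero factor $g_c(z)$: this gives $f_z(c)+\mu\,g_z(c)=0$ for every $c\in S'\setminus\{z\}$. Hence the linear form $f_z+\mu g_z$ vanishes on $S'\setminus\{z\}$; but $S'$ may be taken to have more than a constant number of points, so $S'\setminus\{z\}$ consists of at least three points no three of which are collinear, and therefore $f_z+\mu g_z$ is the zero form. Then $\ker f_z=\ker g_z$, i.e.\ the tangent and the $3$-secant to $S$ at $z$ coincide, which is absurd. I expect the propagation step to be the only genuine obstacle: on its own $b_{xy}(z)=0$ is a single scalar constraint and seems much too weak to be impossible, and the real content is that Lemma~\ref{gfunc} upgrades it to the whole family $b_{cx}(z)=0$, $c\in S'$, after which a routine argument --- a nonzero linear form vanishes on at most a line, whereas $S'\setminus\{z\}$ is large and in general position --- closes the case; the non-vanishing assertions of the first paragraph are pure bookkeeping.
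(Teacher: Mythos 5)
Your argument is correct, but it takes a longer route than the paper's. The paper's proof is two lines: from $b_{xy}(z)=0$, i.e.\ $f_x(z)g_y(z)=g_x(z)f_y(z)$, apply Lemma~\ref{segre} to the pairs $(x,z)$ and $(y,z)$ to replace $f_x(z)$ by $-f_z(x)g_x(z)/g_z(x)$ and $f_y(z)$ by $-f_z(y)g_y(z)/g_z(y)$; after cancelling the nonzero $g$-values this reads $g_z(y)f_z(x)-f_z(y)g_z(x)=0$, so the linear form $g_z(y)f_z(X)-f_z(y)g_z(X)$ vanishes at $x$, at $y$ (trivially) and at $z$ (since $f_z(z)=g_z(z)=0$), three non-collinear points, hence is identically zero and $\ker f_z=\ker g_z$. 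Your detour through Lemma~\ref{gfunc} to propagate to $b_{cx}(z)=0$ for all $c\in S'$ is valid (the specialisation $w\mapsto z$ and the cancellations you perform are all legitimate, and the non-vanishing facts in your first paragraph are exactly right), but it is not the "real content": note that once you define $\mu=f_x(z)/g_x(z)$, the relations $f_c(z)=\mu g_c(z)$ for $c\in\{x,y\}$ already follow from the hypothesis alone ($c=x$ is the definition of $\mu$, and $c=y$ is a restatement of $b_{xy}(z)=0$), and feeding just these two into Lemma~\ref{segre} gives the vanishing of $f_z+\mu g_z$ at $x$, $y$ and $z$ --- which is the paper's argument. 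So your version trades the minimal input (three specific points) for a heavier tool and a dependence on $|S'|$ being large, neither of which is needed; on the other hand it does make transparent that the obstruction is a nonzero linear form vanishing on a set in general position, and it would survive even if one only knew the identity of Lemma~\ref{gfunc} and not Lemma~\ref{segre} for the pair $(y,z)$ directly.
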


\begin{proof}
Suppose $b_{xy}(z) = 0$. Then, applying Lemma~\ref{segre} twice,
$$
g_z(y)f_z(x)-f_z(y)g_z(x)=0.
$$
This implies that $g_z(y)f_z(X)-f_z(y)g_z(X)$ is zero at $x$, $y$ and $z$ which implies that it is identically zero. Hence, $\ker f_z=\ker g_z$, which is a contradiciton.
\end{proof}

For any homogeneous polynomial $f$ in three variables, let $V(f)$ denote the set of points of $\mathrm{PG}(2,q)$ where $f$ vanishes.

\begin{lemma} \label{curve}
Let $s,x,y,z \in S'$. If $q$ is odd then the points of $S'$ are contained in $V(\psi_{xyzs})$, where $\psi_{xyzs}$ is the polynomial of degree $6$ which, with respect to the basis $\{x,y,z\}$, is 
$$
\psi_{xyzs}(X)=s_1 b_{xs}(X)b_{yz}(X)X_2X_3+s_2 b_{ys}(X)b_{zx}(X)X_1X_3+s_3 b_{zs}(X)b_{xy}(X)X_1X_2.
$$
Also, $V(\psi_{xyzs})$ does not change under any permutation of $\{ x,y,z,s\}$.
\end{lemma}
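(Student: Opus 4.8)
I would first establish the more symmetric identity
$$\det(x,y,z)^{2}\,\psi_{xyzs}(X)=b_{xs}(X)b_{yz}(X)\det(x,z,X)\det(y,s,X)-b_{xz}(X)b_{ys}(X)\det(y,z,X)\det(x,s,X),$$
from which both parts of the lemma follow easily. To obtain it, express the coordinates of $s$ and of $X$ with respect to the basis $\{x,y,z\}$ as ratios of determinants ($s_{1}=\det(s,y,z)/\det(x,y,z)$, $X_{2}=\det(x,X,z)/\det(x,y,z)$, and so on), substitute these into the defining expression for $\psi_{xyzs}$, multiply through by $\det(x,y,z)^{3}$, and use identity~\eqref{id2} to replace $b_{zs}(X)b_{xy}(X)$ by $-b_{xs}(X)b_{yz}(X)-b_{ys}(X)b_{zx}(X)$. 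What remains is to simplify the two degree-one factors in $X$ that appear; for instance $\det(y,z,s)\det(x,y,X)-\det(x,y,s)\det(y,z,X)$ is a linear form in $X$ vanishing at $y$ and $s$, hence a scalar multiple of $\det(y,s,X)$, and evaluating at $X=x$ identifies the scalar as $-\det(x,y,z)$ --- the same device used in the proof of Lemma~\ref{gfunc}. Tracking the signs produced by the cyclic relabellings of $3\times3$ determinants then yields the displayed identity.

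Write $\Theta_{xyzs}(X)$ for its right-hand side. For the inclusion $S'\subseteq V(\psi_{xyzs})$, first note that if $w\in\{x,y,z,s\}$ then in each of the two terms of $\Theta_{xyzs}$ some factor $b_{\bullet\bullet}(w)$ vanishes (for example $b_{xs}(x)=b_{xz}(x)=0$, since $f_{x}(x)=g_{x}(x)=0$), so $\psi_{xyzs}(w)=0$. For $w\in S'\setminus\{x,y,z,s\}$ I would apply Lemma~\ref{gfunc} twice, replacing the four points there by $s,x,y,w$ and by $z,x,y,w$ respectively; this gives
$$\frac{b_{xs}(w)}{b_{ys}(w)}=\frac{\det(w,s,x)\,g_{w}(y)\,g_{x}(w)}{\det(w,s,y)\,g_{w}(x)\,g_{y}(w)}\qquad\text{and}\qquad\frac{b_{yz}(w)}{b_{xz}(w)}=\frac{\det(w,z,y)\,g_{w}(x)\,g_{y}(w)}{\det(w,z,x)\,g_{w}(y)\,g_{x}(w)}.$$
Multiplying these the $g$-factors cancel, and after rewriting $\det(x,z,w)$, $\det(y,s,w)$, $\det(y,z,w)$, $\det(x,s,w)$ in terms of $\det(w,z,x)$, $\det(w,s,y)$, $\det(w,z,y)$, $\det(w,s,x)$ by cyclic rotation one sees that the two terms of $\Theta_{xyzs}(w)$ coincide, so $\psi_{xyzs}(w)=0$. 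Every $g$-value and determinant appearing here is nonzero because the line through any two points of $S'$ meets $S$ in no further point (so no three points of $S'$ are collinear, and the $3$-secant at a point of $S'$ contains no other point of $S'$), together with Lemma~\ref{boxlemma}.

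For the permutation statement it is enough to check that $\Theta$ is negated under each of the transpositions $(x\,y)$, $(y\,z)$, $(z\,s)$, since these generate the symmetric group on $\{x,y,z,s\}$. Under $(x\,y)$ and under $(z\,s)$ the two terms of $\Theta$ are simply interchanged, so $\Theta_{yxzs}=-\Theta_{xyzs}$ and $\Theta_{xysz}=-\Theta_{xyzs}$ outright. Under $(y\,z)$ one computes
$$\Theta_{xzys}(X)=-b_{xs}(X)b_{yz}(X)\det(x,y,X)\det(z,s,X)+b_{xy}(X)b_{zs}(X)\det(y,z,X)\det(x,s,X),$$
and the identity $\Theta_{xzys}=-\Theta_{xyzs}$ then reduces, after rearranging, to the two facts $b_{xz}b_{ys}-b_{xy}b_{zs}=b_{xs}b_{yz}$ (which is \eqref{id2} combined with $b_{zx}=-b_{xz}$) and $\det(x,z,X)\det(y,s,X)-\det(x,y,X)\det(z,s,X)=\det(x,s,X)\det(y,z,X)$ (a three-term Grassmann--Pl\"ucker relation, provable by observing that both sides are linear in $s$ and agree when $s$ equals $x$, $y$, or $z$). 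Hence $\Theta_{\sigma}=\mathrm{sgn}(\sigma)\,\Theta_{xyzs}$ for every permutation $\sigma$ of $\{x,y,z,s\}$, so $\psi_{\sigma}$ is a nonzero scalar multiple of $\psi_{xyzs}$ and $V(\psi_{\sigma})=V(\psi_{xyzs})$.

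The main obstacle is the first step: producing the symmetric form of $\psi_{xyzs}$ requires a careful change of basis, the correct use of \eqref{id2}, and disciplined sign-tracking through the cyclic relabellings of $3\times3$ determinants, where a single sign error would be fatal. Once that identity is in hand the vanishing on $S'$ is a short consequence of Lemma~\ref{gfunc} and the permutation invariance is bookkeeping. The only other delicate point is confirming that every denominator and every value $g_{w}(\cdot)$ or $g_{\cdot}(w)$ that occurs is nonzero, which is precisely where the defining property of $S'$ and Lemma~\ref{boxlemma} enter.
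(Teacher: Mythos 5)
Your argument is correct, and it takes a genuinely different route from the paper's. I checked your two-term identity in the basis $\{x,y,z\}$: the first summand contributes $s_1b_{xs}b_{yz}X_2X_3-s_3b_{xs}b_{yz}X_1X_2$, the second contributes $s_2b_{ys}b_{zx}X_1X_3+s_3b_{xz}b_{ys}X_1X_2$, and identity \eqref{id2} converts the two $X_1X_2$ terms into $s_3b_{zs}b_{xy}X_1X_2$, so the identity holds (the normalising power of $\det(x,y,z)$ is $1$ in this basis, so its exact exponent is immaterial). The paper instead eliminates $g_w(x)$ and $g_w(y)$ from the relation $g_w(x)w_1+g_w(y)w_2+g_w(z)w_3=0$ using two instances of Lemma~\ref{gfunc} and then invokes \eqref{id1}; you multiply two instances of Lemma~\ref{gfunc} directly against the two-term form, which is shorter, and your version buys a cleaner proof of the second assertion: the paper only verifies invariance under the transposition of $s$ and $x$ (invariance under permutations of $\{x,y,z\}$ being visible from the formula), whereas your transposition-by-transposition computation, together with the Pl\"ucker relation $\det(x,z,X)\det(y,s,X)-\det(x,y,X)\det(z,s,X)=\det(x,s,X)\det(y,z,X)$, gives the full antisymmetry $\Theta_\sigma=\mathrm{sgn}(\sigma)\,\Theta_{xyzs}$ in one stroke. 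The nonvanishing conditions you cite (the $g$-values, the determinants, and Lemma~\ref{boxlemma}) are exactly the right ones and all follow from the construction of $S'$.

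The one genuine omission is that you never check that $\psi_{xyzs}\not\equiv 0$, which the statement implicitly asserts (``the polynomial of degree $6$'') and which the subsequent lemmas about $\Gamma$ require. This is also the only place in this lemma where the hypothesis that $q$ is odd is actually used: the paper verifies nonvanishing by exhibiting a coefficient of the form $2f_x(y)g_y(x)b_{zs}(x)$, nonzero by Lemma~\ref{segre}, Lemma~\ref{boxlemma} and the defining property of $S'$ precisely because the characteristic is not $2$. Without such a check, your permutation argument only shows that $\psi_\sigma$ and $\psi_{xyzs}$ are scalar multiples of one another, which is vacuous if both vanish identically. Add that one coefficient computation and the proof is complete.
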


\begin{proof}
Let $w \in S'$. By Lemma~\ref{gfunc},  
\begin{equation}
\label{eq1}
\det(w,s,z)g_w(y)g_z(w)b_{ys}(w)=\det(w,s,y)g_w(z)g_y(w)b_{zs}(w)
\end{equation}
and
\begin{equation}
\label{eq2}
\det(w,s,z)g_w(x)g_z(w)b_{xs}(w)=\det(w,s,x)g_w(z)g_x(w)b_{zs}(w).
\end{equation}
With respect to the basis $\{x,y,z\}$, since $g_w(w)=0$, we have
\begin{equation}
\label{eq3}
g_w(x)w_1+g_w(y)w_2+g_w(z)w_3=0.
\end{equation}
Our aim is to eliminate $w$ from the subscripts. By \eqref{eq1} and \eqref{eq2} we get
\[g_w(x)=\frac{\det(w,s,x)g_w(z)g_x(w)b_{zs}(w)}{\det(w,s,z)g_z(w)b_{xs}(w)}\]
and
\[g_w(y)=\frac{\det(w,s,y)g_w(z)g_y(w)b_{zs}(w)}{\det(w,s,z)g_z(w)b_{ys}(w)},\]
respectively. Substituting the above equations into \eqref{eq3} gives
\[\frac{\det(w,s,x)g_w(z)g_x(w)b_{zs}(w)}{\det(w,s,z)g_z(w)b_{xs}(w)}w_1+
\frac{\det(w,s,y)g_w(z)g_y(w)b_{zs}(w)}{\det(w,s,z)g_z(w)b_{ys}(w)}w_2+
g_w(z)w_3=0.\]
After multiplying by $b_{xs}(w)b_{ys}(w)\det(w,s,z)g_z(w)/g_w(z)$ we obtain
%\begin{equation}
%\label{eq4}
\[b_{ys}(w)\det(w,s,x)g_x(w)b_{zs}(w)w_1+
b_{xs}(w)\det(w,s,y)g_y(w)b_{zs}(w)w_2+\]
\[b_{ys}(w)b_{xs}(w)\det(w,s,z)g_z(w)w_3=0.\]
%\end{equation}
Note that $\det(w,s,x)=w_2s_3-s_2w_3$, $\det(w,s,y)=w_3s_1-w_1s_3$ and $\det(w,s,z)=w_1s_2-s_1w_2$. Substituting these into our last equation, we obtain
\[s_1w_2w_3b_{xs}(w)(b_{zs}(w)g_y(w)-b_{ys}(w)g_z(w))+\]
\[s_2w_1w_3b_{ys}(w)(b_{xs}(w)g_z(w)-b_{zs}(w)g_x(w))+\]
\[s_3w_1w_2b_{zs}(w)(b_{ys}(w)g_x(w)-b_{xs}(w)g_y(w))=0.\]
By \eqref{id1}
\[g_x(w)b_{ys}(w)-b_{xs}(w)g_y(w)=g_s(w)b_{yx}(w)\]
and similarly
\[g_z(w)b_{xs}(w)-b_{zs}(w)g_x(w)=g_s(w)b_{xz}(w),\]
\[g_y(w)b_{zs}(w)-b_{ys}(w)g_z(w)=g_s(w)b_{zy}(w).\]
Then
$$
s_1 b_{xs}(w)b_{yz}(w)w_2w_3+s_2 b_{ys}(w)b_{zx}(w)w_1w_3+s_3 b_{zs}(w)b_{xy}(w)w_1w_2=0.
$$

This implies that $V(\psi_{xyzs})$ contains $S'$, where $\psi_{xyzs}$ is defined as in the statement of the theorem. Observe that, by Lemma~\ref{segre}, the coefficient of $w_1^4w_2^2$ in the above equation is $2f_x(y)g_y(x)b_{zs}(x)$, so $\psi_{xyzs}(X) \not\equiv 0$.

To check that permuting the roles of $s$ and one of $\{ x,y,z\}$ does not change $V(\psi_{xyzs})$, we rewrite the above equation as 
$$
\det(s,y,z) b_{xs}(w)b_{yz}(w)\det(x,w,z)\det(x,y,w)
$$
$$
+\det(x,s,z) b_{ys}(w)b_{zx}(w)\det(w,y,z)\det(x,y,w)
$$
$$
+\det(x,y,s) b_{zs}(w)b_{xy}(w)\det(w,y,z)\det(x,w,z)=0.
$$
Switching $s$ and $x$ in the above, calculating with respect to the basis $\{ x,y,z\}$ and applying \eqref{id2}, i.e.
$$
b_{xs}(w)b_{yz}(w)+ b_{ys}(w)b_{zx}(w)+b_{zs}(w)b_{xy}(w)=0,
$$
we get the equation $s_1\psi_{xyzs}(w)=0$,
which implies that $V(\psi_{xyzs})$ does not change under any permutation of $\{ x,y,z,s\}$. 

\end{proof}

\begin{lemma} \label{curveeqn}
Let $s,x,y,z \in S'$. If $q$ is odd then for each $u \in \{x,y,z,s\}$, the point $u$ is a double point of $V(\psi_{xyzs})$ and the tangents to $V(\psi_{xyzs})$ at $u$ are the kernels of $f_u(X)$ and $g_u(X)$. 

Moreover, with respect to the basis $\{x,y,z\}$, the terms of $\psi_{xyzs}$, which are of degree at least four in one of the variables, are
$$
2s_2s_3 b_{zy}(x)f_x(X)g_x(X)\frac{g_s(x)}{g_x(s)} X_1^4+2s_1s_3 b_{xz}(y)f_y(X)g_y(X)\frac{g_s(y)}{g_y(s)} X_2^4$$
$$
+2s_1s_2 b_{yx}(z)f_z(X)g_z(X)\frac{g_s(z)}{g_z(s)} X_3^4.
$$
\end{lemma}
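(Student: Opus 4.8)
The plan is to prove the statement at $u=x$ and deduce the rest by symmetry. By Lemma~\ref{curve} the curve $V(\psi_{xyzs})$ is unchanged under any permutation of $\{x,y,z,s\}$, and $\psi_{xyzs}$ itself is unchanged under simultaneously cycling $x\to y\to z\to x$ and relabelling the coordinates $X_1\to X_2\to X_3\to X_1$ (and $s_1\to s_2\to s_3$). So it is enough to treat $u=x$: the cases $u\in\{y,z\}$ then follow by this cyclic symmetry, and the case $u=s$ follows after replacing the basis $\{x,y,z\}$ by $\{s,y,z\}$. Throughout we use that $x$ lies on both $\ker f_x$ and $\ker g_x$, so $f_x(x)=g_x(x)=0$; hence, in the basis $\{x,y,z\}$, the linear forms $f_x,g_x$ have zero $X_1$-coefficient and $b_{xs}(x)=b_{zx}(x)=b_{xy}(x)=0$.

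First I would expand $\psi_{xyzs}$ around $x$, i.e.\ dehomogenise by $X_1=1$ and write $X=x+v$ with $v=X_2y+X_3z$. Each factor $b_{ab}(x+v)$ equals $b_{ab}(x)$, plus a linear-in-$v$ term, plus $b_{ab}(v)$; when $a=x$ or $b=x$ the linear term is again a combination of $f_x$ and $g_x$ — for instance the linear term of $b_{zx}$ at $x$ is $f_z(x)g_x(X)-g_z(x)f_x(X)$, and that of $b_{xy}$ is $g_y(x)f_x(X)-f_y(x)g_x(X)$. Using $b_{xs}(x)=b_{zx}(x)=b_{xy}(x)=0$ together with the monomial factors $X_2X_3$, $X_1X_3$, $X_1X_2$ of the three summands of $\psi_{xyzs}$, one checks that $\psi_{xyzs}$ has no monomial of $X_1$-degree $5$ or $6$. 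Thus $x$ is a singular point of $V(\psi_{xyzs})$ whose tangent cone is the $X_1$-degree-$4$ block of $\psi_{xyzs}$ with $X_1^4$ removed; collecting the relevant contributions (the summand $s_1b_{xs}b_{yz}X_2X_3$ drops out because it would require $b_{xs}(x)b_{yz}(x)=0$), this tangent cone is
$$
T=s_2b_{ys}(x)\big(f_z(x)g_x(X)-g_z(x)f_x(X)\big)X_3+s_3b_{zs}(x)\big(g_y(x)f_x(X)-f_y(x)g_x(X)\big)X_2 .
$$

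The crux is to identify $T$ with $\lambda_x f_x(X)g_x(X)$, where $\lambda_x=2s_2s_3b_{zy}(x)g_s(x)/g_x(s)$. Writing $T=f_x(X)A(X)+g_x(X)B(X)$ with $A(X)=s_3b_{zs}(x)g_y(x)X_2-s_2b_{ys}(x)g_z(x)X_3$ and $B(X)=s_2b_{ys}(x)f_z(x)X_3-s_3b_{zs}(x)f_y(x)X_2$, the assertion is that $A(X)$ is a scalar multiple of $g_x(X)=g_x(y)X_2+g_x(z)X_3$ and $B(X)$ is a scalar multiple of $f_x(X)=f_x(y)X_2+f_x(z)X_3$. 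These two facts are precisely Lemma~\ref{gfunc} applied to the four points $(s,y,z,x)$ with $w=x$, respectively the $f$-analogue of Lemma~\ref{gfunc} (which holds by the same proof with $f$ and $g$ interchanged) applied to $(s,y,z,x)$ with $w=x$, once one inserts $\det(x,s,z)=s_2$ and $\det(x,s,y)=-s_3$ in the basis $\{x,y,z\}$. Hence $T=(\alpha+\beta)f_x(X)g_x(X)$ where $A=\alpha g_x$ and $B=\beta f_x$; evaluating at $X=y$ and at $X=z$ computes $\alpha$ and $\beta$, and Lemma~\ref{segre} in the shape $f_x(z)g_z(x)+f_z(x)g_x(z)=0$ gives $\alpha=\beta=s_2b_{ys}(x)f_z(x)/f_x(z)$, so $\lambda_x=2\alpha$. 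A final application of Lemma~\ref{gfunc}, to $(y,z,s,x)$ with $w=x$ (now $\det(x,y,s)=s_3$, $\det(x,y,z)=1$, $b_{sy}(x)=-b_{ys}(x)$), together with Lemma~\ref{segre} once more, rewrites $2\alpha$ as $2s_2s_3b_{zy}(x)g_s(x)/g_x(s)$. Since no three points of $S'$ are collinear we have $s_2,s_3\neq 0$; by Lemma~\ref{boxlemma} $b_{zy}(x)\neq 0$; and $g_s(x),g_x(s)\neq 0$ because $S'$ meets each $3$-secant in at most one point. So $\lambda_x\neq 0$, and as $f_x$ and $g_x$ are not proportional (a tangent is not a $3$-secant), $x$ is an ordinary double point of $V(\psi_{xyzs})$ with tangent lines $\ker f_x$ and $\ker g_x$. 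The ``moreover'' assertion is then immediate: no monomial of $\psi_{xyzs}$ can have degree $\geqslant 4$ in two of the three variables at once, so the terms of degree $\geqslant 4$ in one variable are exactly $X_1^4T$ and its two cyclic images, which is the displayed expression.

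The step I expect to be delicate is the middle of the third paragraph: choosing which specialisations of Lemma~\ref{gfunc} and of its $f$-analogue to invoke, and controlling the signs that Lemma~\ref{segre} produces. Everything else — the vanishing of the low-$X_1$-degree part of $\psi_{xyzs}$ and the accounting of which summand produces which monomial — is routine once $f_x(x)=g_x(x)=0$ is exploited.
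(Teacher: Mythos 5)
Your proposal is correct: all the sign-sensitive steps check out (the two specialisations of Lemma~\ref{gfunc} and of its $f$-analogue do yield exactly the proportionalities $A\propto g_x$, $B\propto f_x$, and the final scalar $2s_2s_3b_{zy}(x)g_s(x)/g_x(s)$ agrees with the statement). The overall strategy coincides with the paper's: both proofs extract the block of $\psi_{xyzs}$ of degree four in one coordinate, observe that no higher degree in that coordinate occurs, and identify the resulting quadratic tangent cone as a nonzero multiple of $f_ug_u$, with nonvanishing coming from Lemma~\ref{boxlemma}. The difference is in how the factorisation of the tangent cone is established. The paper works at the point $y$ and proceeds by direct coefficient manipulation: it computes the ratio $s_3b_{zs}(y)/(s_1b_{xs}(y))$ by expanding $f_y(s)$ and $g_y(s)$ in the basis and applying Lemma~\ref{segre} repeatedly, which makes the factor $f_y(X)g_y(X)$ appear by explicit cancellation. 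You instead split the tangent cone as $f_xA+g_xB$ and recognise the two required proportionalities as instances of Lemma~\ref{gfunc} (and its $f\leftrightarrow g$ mirror, which indeed holds by the same proof since Lemma~\ref{segre} is symmetric in $f$ and $g$ up to the sign of $b$). This reuses a lemma the paper only invokes in Lemma~\ref{curve} and makes the conceptual reason for the factorisation more visible, at the cost of having to track which specialisation of Lemma~\ref{gfunc} is being used; the paper's version is more self-contained but more computational. Your reduction of the cases $u\in\{y,z\}$ to $u=x$ via the cyclic symmetry of $\psi_{xyzs}$, and of $u=s$ via the proportionality of $\psi_{xyzs}$ and $\psi_{syzx}$ from the proof of Lemma~\ref{curve}, is at the same level of rigour as the paper's ``interchanging the roles'' remark.
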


\begin{proof}
To calculate the terms divisible by $X_2^4$ observe that the degree of $X_2$ in $b_{yz}(X)$, $b_{ys}(X)$ and $b_{xy}(X)$ is one. Hence, the terms divisible by $X_2^4$ come from 
$$
s_1 b_{xs}(X)b_{yz}(X)X_2X_3+s_3 b_{zs}(X)b_{xy}(X)X_1X_2,
$$
and we have to take the coefficient of $X_2^2$ in $b_{xs}(X)$ and $b_{zs}(X)$, which is $b_{xs}(y)$ and $b_{zs}(y)$ respectively. Moreover, we have to take the coefficient of $X_2$ in $b_{yz}(X)$ and $b_{xy}(X)$, which is $f_y(X)g_z(y)-g_y(X)f_z(y)$ and $f_x(y)g_y(X)-g_x(y)f_y(X)$ respectively.

Putting this together we deduce that the terms divisible by $X_2^4$ are $X_2^4$ times
$$
s_1 X_3 b_{xs}(y)(f_y(X)g_z(y)-g_y(X)f_z(y))+s_3 X_1 b_{zs}(y)(f_x(y)g_y(X)-g_x(y)f_y(X)).
$$

Now, using Lemma~\ref{segre} and substituting $f_y(s)=f_y(x)s_1+f_y(z)s_3$, etc,
$$
\frac{s_3 b_{zs} (y)}{s_1 b_{xs}(y)}=\frac{s_3(f_z(y)g_s(y)-f_s(y)g_z(y))}{s_1(f_x(y)g_s(y)-g_x(y)f_s(y))}=\frac{s_3(f_z(y)g_y(s)+f_y(s)g_z(y))}{s_1(f_x(y)g_y(s)+g_x(y)f_y(s))}
$$

$$
=\frac{s_3(f_z(y)g_y(x)s_1+f_z(y)g_y(z)s_3+f_y(x)g_z(y)s_1+f_y(z)g_z(y)s_3)}{s_1(f_x(y)g_y(x)s_1+f_x(y)g_y(z)s_3+g_x(y)f_y(x)s_1+g_x(y)f_y(z)s_3)}
$$

$$
=\frac{s_3s_1(f_z(y)g_y(x)+f_y(x)g_z(y))}{s_1s_3(f_x(y)g_y(z)+g_x(y)f_y(z))}=-\frac{g_z(y)g_y(x)}{g_x(y)g_y(z)}.
$$

Therefore, the terms divisible by $X_2^4$ are $X_2^4$ times
$$
s_1 b_{xs}(y)(X_3(f_y(X)g_z(y)-g_y(X)f_z(y))-X_1\frac{g_z(y)g_y(x)}{g_x(y)g_y(z)}(f_x(y)g_y(X)-g_x(y)f_y(X)))
$$

$$
=s_1 b_{xs}(y)\frac{g_z(y)}{g_y(z)}(X_3(f_y(X)g_y(z)+g_y(X)f_y(z))-X_1\frac{g_y(x)}{g_x(y)}(f_x(y)g_y(X)-g_x(y)f_y(X)))
$$

$$
=s_1 b_{xs}(y)\frac{g_z(y)}{g_y(z)}(X_3f_y(X)g_y(z)+X_3g_y(X)f_y(z)+X_1f_y(x)g_y(X)+X_1g_y(x)f_y(X))
$$

$$
=2s_1b_{xs}(y)\frac{g_z(y)}{g_y(z)}f_y(X)g_y(X).
$$

By Lemma~\ref{segre}, 
$$
b_{xs}(y)=f_x(y)g_s(y)-g_x(y)f_s(y)=\frac{g_s(y)}{g_y(s)}(f_x(y)g_y(s)+g_x(y)f_y(s)),
$$
which gives
$$
b_{xs}(y)=\frac{g_s(y)}{g_y(s)}(f_x(y)(g_y(x)s_1+g_y(z)s_3)+g_x(y)(f_y(x)s_1+f_y(z)s_3))
$$
and so again by Lemma~\ref{segre}, 
$$
b_{xs}(y)=\frac{g_s(y)g_y(z)}{g_y(s)g_z(y)}s_3 b_{xz}(y).
$$
This last expression implies that the terms divisible by $X_2^4$ are $X_2^4$ times
$$
2s_1s_3 b_{xz}(y)f_y(X)g_y(X)\frac{g_s(y)}{g_y(s)}.
$$

By Lemma \ref{boxlemma} $b_{xz}(y)\neq 0$ and hence the multiplicity of $y$ is two.
Interchanging the roles of $y$ with either $x,z$ or $s$, we have that $u$ is a double point of $V(\psi_{xyzs})$ with tangents $f_u(X)$ and $g_u(X)$ for all $u \in \{ x,y,z,s \}$.
\end{proof}

\begin{lemma} \label{gcdlemma}
Let $U$ be a subspace of polynomials of ${\mathbb F}_q[X_1,X_2,X_3]$ and let $d$ denote the maximum degree of the elements of $U$. If $d \leqslant q$, then there exist $f,g\in U$ such that $\gcd(U)=\gcd(f,g)$. 
\end{lemma}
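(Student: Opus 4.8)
The plan is to reduce to the case $\gcd(U)=1$ and then to exploit the fact that a vector space over ${\mathbb F}_q$ cannot be covered by $q$ or fewer proper subspaces; the hypothesis $d\leqslant q$ enters precisely to ensure that only $q$ proper subspaces need to be avoided.

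First I would put $h=\gcd(U)$ and write $U=h\,U'$ with $U'=\{u/h:u\in U\}$; this makes sense since $h$ divides every element of $U$. Then $U'$ is an ${\mathbb F}_q$-subspace isomorphic to $U$, every element of $U'$ has degree at most $d\leqslant q$, and $\gcd(U')=1$. Since $\gcd(hf',hg')=h\,\gcd(f',g')$ for all $f',g'\in U'$, it is enough to find $f',g'\in U'$ with $\gcd(f',g')=1$, so from now on I assume $\gcd(U)=1$ (the case $U=\{0\}$ being trivial).

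For each irreducible $p\in{\mathbb F}_q[X_1,X_2,X_3]$ set $V_p=\{u\in U:p\mid u\}$. Each $V_p$ is a subspace of $U$, and it is \emph{proper}, since otherwise $p$ would divide every element of $U$ and hence divide $\gcd(U)=1$. Now choose any nonzero $f\in U$ and let $p_1,\dots,p_k$ be its distinct irreducible factors; as $p_1\cdots p_k\mid f$ and each $p_i$ has degree at least $1$, we get $k\leqslant\deg f\leqslant d\leqslant q$. Hence $U$ is not the union of the $k\leqslant q$ proper subspaces $V_{p_1},\dots,V_{p_k}$, and we may pick $g\in U\setminus(V_{p_1}\cup\cdots\cup V_{p_k})$. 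No irreducible polynomial then divides both $f$ and $g$ — such a polynomial would be one of $p_1,\dots,p_k$, yet $g$ lies in none of the $V_{p_i}$ — so $\gcd(f,g)=1=\gcd(U)$. Applying this to $f'$ and the resulting $g'$ in $U'$ and scaling back by $h$ yields the statement in general.

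The only genuine ingredient is the claim that a vector space over ${\mathbb F}_q$ is not a union of $q$ proper subspaces; I would either cite it or prove it by the standard short argument: in a minimal such covering $W=W_1\cup\cdots\cup W_m$ choose $x\in W_1\setminus\bigcup_{i\geqslant2}W_i$ and $y\notin W_1$, and note that the $q$ vectors $y+tx$, $t\in{\mathbb F}_q$, avoid $W_1$ and lie in pairwise distinct members of $W_2,\dots,W_m$, forcing $m\geqslant q+1$. I expect this covering fact — together with checking that the degree bound $d\leqslant q$ is exactly what makes $k\leqslant q$ — to be the only step needing care; the rest is bookkeeping in the UFD ${\mathbb F}_q[X_1,X_2,X_3]$.
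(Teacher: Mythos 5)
Your proof is correct, and it takes a genuinely different route from the paper's. The paper argues by induction on $\dim U$: after reducing to $\gcd(U)=1$ it treats the three-dimensional case by forming the pencil $H=\{b_1+\alpha b_2 : \alpha\in{\mathbb F}_q\}\cup\{b_2\}$ of $q+1$ polynomials and observing that the greatest common divisors $\gcd(h,b_3)$, $h\in H$, are pairwise coprime, so if all were nontrivial then $b_3$ would have $q+1$ pairwise coprime nonconstant divisors, contradicting $\deg b_3\leqslant q$; the general case then follows by induction on the dimension. You instead fix an arbitrary nonzero $f\in U$ and avoid the at most $\deg f\leqslant q$ proper subspaces $V_{p_i}=\{u\in U : p_i\mid u\}$ attached to its distinct irreducible factors, invoking the fact that a vector space over ${\mathbb F}_q$ is not a union of $q$ proper subspaces. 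Both arguments use the hypothesis $d\leqslant q$ at the same pigeonhole threshold ($q+1$ objects versus degree $\leqslant q$), but yours needs no induction, works in any dimension at once, and actually proves the slightly stronger statement that the first polynomial $f$ may be chosen arbitrarily (nonzero) in $U$; the price is the external covering lemma, which you correctly prove by the standard $y+tx$ argument. Your reduction to $\gcd(U)=1$, the properness of each $V_p$, and the bound on the number of distinct irreducible factors are all handled correctly, and the degenerate cases ($U=\{0\}$, $\dim U=1$, $f$ constant) all go through.
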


\begin{proof}
Let $b_1,\ldots,b_k$ be a basis for $U$ and denote $\gcd(U)$ by $\Gamma$. 
After dividing by $\Gamma$, we may assume that $\gcd(U)$ is 1. 
We may also assume $k>2$, since otherwise the statement is obvious. 

Suppose $k=3$. Let
$$
H=\{b_1+\alpha b_2 \ | \ \alpha\in {\mathbb F}_q \}\cup \{b_2\}.
$$
For any two different $h_1,h_2 \in H$ we have $\gcd(h_1,h_2)$ divides $\gcd(b_1,b_2)$. By assumption, $\gcd(\gcd(b_1,b_2),b_3)=1$. Thus, $\gcd(\gcd(h_1,b_3),\gcd(h_2,b_3))=1$. Therefore, if $h$ and $b_3$ are not coprime for each $h\in H$ then $b_3$ has at least $q+1$ distinct divisors, a contradiction since $\deg b_3 < q+1$. 

For $k>3$ the result follows by induction, since in $U'=\langle b_1,b_2,b_3\rangle_{{\mathbb F}_q}$ we can find $f,g$ such that $\gcd(f,g)=\gcd(U')$ 
and $\gcd(U)=\gcd(\gcd(U'),b_4,\ldots,b_k)=\gcd(f,g,b_4,\ldots,b_k)$. 
\end{proof}

Let $\Gamma(X)$ be the maximum common divisor of the polynomials in the subspace generated by
$$
\{ \psi_{xyzs}(X) \ | \ \{x,y,z,s\} \subset S' \}.
$$

\begin{lemma} \label{lotsofthem}
The set of points $V(\Gamma)$ contains at least $|S'|-c$ points of $S'$, for some constant $c$.
\end{lemma}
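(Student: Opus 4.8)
The plan is to derive the lemma from Lemma~\ref{curve} (each $\psi_{xyzs}$ vanishes on all of $S'$), Lemma~\ref{gcdlemma} (the gcd of a space of low-degree forms is realised as the gcd of two of its members), and B\'ezout's theorem. Put $U\subseteq {\mathbb F}_q[X_1,X_2,X_3]$ for the subspace spanned by $\{\psi_{xyzs}\ :\ \{x,y,z,s\}\subseteq S'\}$, so that $\Gamma=\gcd(U)$. Every generator is a non-zero form of degree $6$ which, by Lemma~\ref{curve}, vanishes at every point of $S'$; since evaluation at a fixed point is linear, every element of $U$ vanishes on $S'$, i.e. $S'\subseteq V(f)$ for all $f\in U$. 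In particular $U\neq\{0\}$, so $\Gamma\neq 0$ and $\deg\Gamma\leqslant 6$.

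I would then apply Lemma~\ref{gcdlemma}. The cases $q\in\{3,5\}$ may be set aside: there $|S'|\leqslant q+2$ is bounded by a constant and the assertion $|S'\cap V(\Gamma)|\geqslant |S'|-c$ is trivial once $c$ is large enough. So assume $q\geqslant 7$, whence the maximal degree $d=6$ of the elements of $U$ satisfies $d\leqslant q$, and Lemma~\ref{gcdlemma} provides $\psi_1,\psi_2\in U$ with $\gcd(\psi_1,\psi_2)=\Gamma$. We may assume $\psi_1,\psi_2\neq 0$, since otherwise $\Gamma$ is a scalar multiple of an element of $U$ and then $V(\Gamma)\supseteq S'$, which is stronger than claimed. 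Write $\psi_i=\Gamma h_i$; from $\gcd(\Gamma h_1,\Gamma h_2)=\Gamma$ and $\Gamma\neq 0$ we get $\gcd(h_1,h_2)=1$, and $\deg h_i=\deg\psi_i-\deg\Gamma\leqslant 6$. Now take a point $p\in S'$. Since $p\in V(\psi_1)\cap V(\psi_2)$, either $\Gamma(p)=0$, so $p\in V(\Gamma)$, or $\Gamma(p)\neq 0$ and then $h_1(p)=h_2(p)=0$, so $p\in V(h_1)\cap V(h_2)$. As $h_1,h_2$ are coprime they share no common component (also over $\overline{{\mathbb F}}_q$), so B\'ezout's theorem bounds $|V(h_1)\cap V(h_2)|$ by $\deg h_1\cdot\deg h_2\leqslant 36$. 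Hence at most $36$ points of $S'$ lie outside $V(\Gamma)$, i.e. $|S'\cap V(\Gamma)|\geqslant |S'|-36$, which proves the lemma with $c=36$.

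I do not expect a genuine difficulty here; the two points that need a little care are the degree hypothesis of Lemma~\ref{gcdlemma}, which forces the harmless restriction $q\geqslant 7$ that is absorbed into the constant, and the applicability of B\'ezout, which requires $h_1$ and $h_2$ to have no common component — exactly what the choice $\gcd(\psi_1,\psi_2)=\Gamma$ secures once one divides through by $\Gamma$. Note that no bound on $\deg\Gamma$ itself is needed for this lemma; controlling $V(\Gamma)$ more finely, via Lemma~\ref{curveeqn}, is presumably what the remainder of the proof of Theorem~\ref{main} requires.
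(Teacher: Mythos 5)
Your proposal is correct and follows essentially the same route as the paper: invoke Lemma~\ref{gcdlemma} to realise $\Gamma$ as the gcd of two elements $\psi_1=\Gamma h_1$, $\psi_2=\Gamma h_2$ of the subspace, observe that every point of $S'$ lies on both $V(\psi_1)$ and $V(\psi_2)$, and bound $|V(h_1)\cap V(h_2)|$ by B\'ezout since $h_1$ and $h_2$ are coprime. Your version is in fact slightly more careful than the paper's, spelling out the degree hypothesis of Lemma~\ref{gcdlemma}, the coprimality of the cofactors, and the explicit constant $c=36$.
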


\begin{proof}
By Lemma~\ref{gcdlemma}, there are two polynomials $\psi(X),\psi'(X)$ in the subspace generated by 
$$
 \{ \psi_{xyzs}(X) \ | \ \{x,y,z,s\} \subset S' \}
$$ 
whose maximum common divisor is $\Gamma$. If $\psi=\phi\Gamma$ and $\psi'=\phi'\Gamma$ then, by B\'ezout's theorem, $V(\phi) \cap V(\phi')$ contains a constant number of points. The set $S' \setminus (V(\phi) \cap V(\phi'))$ is contained in $V(\Gamma)$.
\end{proof}

\begin{lemma}\label{singular=double}
	The singular points of $V(\Gamma)$ are double points.
\end{lemma}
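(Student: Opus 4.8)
The plan is to reduce the statement to a fact about the sextics $\psi_{xyzs}$ and then treat separately the singular points of $V(\Gamma)$ lying on $S'$ and those that do not. Since $\Gamma$ divides every $\psi_{xyzs}$, we have $\mathrm{mult}_P(\Gamma)\le\mathrm{mult}_P(\psi_{xyzs})$ for every point $P$ and every four-tuple $\{x,y,z,s\}\subseteq S'$; hence it suffices to exhibit, for each singular point $P$ of $V(\Gamma)$, a four-tuple with $\mathrm{mult}_P(\psi_{xyzs})=2$ (the multiplicity is automatically at least $2$, since $P$ is singular on $V(\Gamma)$ and $\Gamma\mid\psi_{xyzs}$). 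If $P\in S'$ this is immediate: as $|S'|$ exceeds a large constant we may choose $x,y,z\in S'\setminus\{P\}$, and Lemma~\ref{curveeqn} applied to the four-tuple $\{x,y,z,P\}$ says precisely that $P$ is a double point of $V(\psi_{xyzP})$.

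Now assume $P\notin S'$ and, for contradiction, that $\mathrm{mult}_P(\Gamma)\ge 3$, so that $\mathrm{mult}_P(\psi_{xyzs})\ge 3$ for every four-tuple. First I would record that no $\psi_{xyzs}$ is irreducible: otherwise $\Gamma$, being a non-unit divisor of it (by Lemma~\ref{lotsofthem} $V(\Gamma)\neq\emptyset$), would equal $\psi_{xyzs}$ up to scalar, forcing all the $\psi$'s to be scalar multiples of a single sextic, which then has a node at every point of $S'$, impossible once $|S'|>10$. The key computation is to restrict $\psi_{xyzs}$ to the line joining two of the four points: working in the basis $\{x,y,z\}$ as in Lemma~\ref{curve}, on the line $\ker X_3$ (the line through $x$ and $y$) the polynomial $\psi_{xyzs}$ reduces to a nonzero scalar times $b_{zs}(X)X_1^2X_2^2$, so by Lemma~\ref{boxlemma} this line is not a component of $V(\psi_{xyzs})$ and the intersection divisor cut on it is $2x+2y$ together with the two points of $V(b_{zs})$ on the line. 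Since $\mathrm{mult}_P(\psi_{xyzs})\ge 3$ and $P\neq x,y$, the point $P$ must be one of those two points, i.e. $P\in V(b_{zs})$. Because $S'$ has no three collinear points while $|S'|$ exceeds a constant, running this over the pairs inside a four-tuple and over many four-tuples forces $P\in V(b_{cd})$ for all pairs $c,d\in S'$, equivalently that $f_c(P)/g_c(P)$ is independent of $c$. Feeding this back, together with the node structure (the tangent cone of $\psi_{xyzs}$ at each $u\in S'$ is $f_u g_u$, by Lemma~\ref{curveeqn}) and the factorisations forced by $\Gamma\mid\psi_{xyzs}$, should show that the cofactor of $\Gamma$ in $\psi_{xyzs}$ is essentially a fixed line passing through all but boundedly many points of $S'$, contradicting non-collinearity.

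I expect the main obstacle to be the configuration in which $S'\cup\{P\}$ is an arc, so that no line through $P$ meets $S'$ in two points and the line-restriction argument never starts. There one must analyse the reducible factorisation $\psi_{xyzs}=AB$ directly: a component of $\Gamma$ carrying $P$ with multiplicity at least $3$ is either a cubic — hence three lines through $P$ — or an irreducible curve of degree at least $4$ singular at $P$. In the first case such a component meets $S'$ in at most six points, while its complement in $\psi_{xyzs}$ has too small a degree to account for the remaining points of $S'$ without forcing the cofactor line to contain more than two of them, which is exactly the place where the freedom to take $|S'|$ larger than any fixed constant (by enlarging $c$) is used. Ruling out the remaining irreducible high-degree case is the delicate point; I would handle it by the same tangent-cone bookkeeping at the points of $S'\cap V(\Gamma)$, showing that such a component cannot simultaneously be singular at $P$ and carry all but boundedly many of the points of $S'$ with the prescribed tangents $\ker f_u,\ker g_u$.
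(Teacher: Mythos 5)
Your first paragraph is, in substance, the paper's entire proof: the authors' argument (stated in one line) is exactly that $\Gamma$ divides each $\psi_{xyzs}$, so for $P\in S'$ one takes a four-tuple containing $P$ and reads off $\mathrm{mult}_P(\Gamma)\leqslant\mathrm{mult}_P(\psi_{xyzP})=2$ from Lemma~\ref{curveeqn}. (Your side remark that the multiplicity is ``automatically at least $2$'' is unnecessary --- the point is only the upper bound --- but harmless.) Note also that in the rest of the paper the simple/double dichotomy for $V(\Gamma)$ is only ever invoked at points of $S'$ (in Lemmas~\ref{notdoubles}--\ref{degree3}, always through sets of the form $S'\setminus D$), so this case is all that is actually used.

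The bulk of your proposal, devoted to a hypothetical singular point $P\notin S'$, is where the difference lies, and it is not a completed argument: the line-restriction computation is correct as far as it goes (on the line through $x,y\in S'$ the sextic restricts to $2s_3f_x(y)g_y(x)\,b_{zs}\,X_1^2X_2^2$, a degree-six divisor $2x+2y+Q_1+Q_2$, so no such line can carry a triple point of $\psi_{xyzs}$ unless it is a component), but the subsequent steps are explicitly conjectural (``should show'', ``I would handle it by'', ``the delicate point''), so as a proof of the literal statement for all singular points there is a genuine gap. You should be aware, however, that the paper's own one-line proof does not cover the case $P\notin S'$ either; the lemma as written overstates what is proved and what is needed. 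If you restrict the claim to singular points of $V(\Gamma)$ lying in $S'$, your argument is complete and identical to the paper's; if you insist on the full claim, neither your sketch nor the paper closes it.
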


\begin{proof}
	It follows from the definition of $\Gamma$ and from Lemma \ref{curveeqn}.
\end{proof}

\begin{lemma} \label{notdoubles}
The variety $V(\Gamma)$ does not have more than a constant number of double points at the points of $S'$.
\end{lemma}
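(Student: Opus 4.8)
The plan is to prove the slightly stronger statement that every point $x\in S'$ which is a double point of $V(\Gamma)$ is in fact a singular point of the reduced curve $V(\Gamma_0)$, where $\Gamma_0$ denotes the squarefree part (radical) of $\Gamma$. Since $\Gamma$ divides $\psi_{xyzs}$ and $\deg\psi_{xyzs}=6$, the polynomial $\Gamma_0$ is squarefree of degree at most $6$, so $V(\Gamma_0)$ is a plane curve of bounded degree with no repeated component, and such a curve has only a bounded number of singular points. The lemma follows. (If $|S'|<4$ there is nothing to prove, so I would assume $|S'|\geqslant 4$.)

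The key step is to pin down the local structure of $V(\Gamma)$ at such an $x$. Fix $x\in S'$ at which $\Gamma$ has multiplicity $2$, and pick any three further points $y,z,s\in S'$. Since $\Gamma\mid\psi_{xyzs}$, write $\psi_{xyzs}=\phi\Gamma$. By Lemma~\ref{curveeqn} the point $x$ has multiplicity exactly $2$ on $V(\psi_{xyzs})$, with tangent cone the union of the two \emph{distinct} lines $\ker f_x$ and $\ker g_x$. Comparing multiplicities at $x$ (using additivity, $\mathrm{mult}_x\psi_{xyzs}=\mathrm{mult}_x\phi+\mathrm{mult}_x\Gamma=\mathrm{mult}_x\phi+2$) forces $\mathrm{mult}_x\phi=0$, i.e.\ $\phi(x)\neq 0$; hence the tangent cone of $\Gamma$ at $x$ equals that of $\psi_{xyzs}$ up to the nonzero scalar $\phi(x)$. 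Therefore $x$ is an ordinary node of $V(\Gamma)$, with tangent cone $\ker f_x\cup\ker g_x$.

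Next I would factor $\Gamma=p_1^{a_1}\cdots p_r^{a_r}$ into distinct irreducibles and let $m_i$ be the multiplicity of $p_i$ at $x$, so $\sum_i a_im_i=2$. If some component with $a_i\geqslant 2$ passed through $x$, then $a_im_i\geqslant 2$ would force $a_i=2$, $m_i=1$ and $x$ on no other component, making the tangent cone of $\Gamma$ at $x$ the square of a single line, contradicting that $x$ is an ordinary node. So $x$ lies only on components occurring with multiplicity one in $\Gamma$, whence either $x$ lies on two distinct such components, or it lies on a single one with multiplicity two; in both cases $\mathrm{mult}_x\Gamma_0\geqslant 2$, so $x$ is a singular point of $V(\Gamma_0)$. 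Finally, since $V(\Gamma_0)$ has degree at most $6$ and no repeated component, the number of points lying on two distinct irreducible components is at most $\binom{6}{2}$ by B\'ezout's theorem, and each irreducible component contributes at most a bounded number of further singular points (e.g.\ by B\'ezout applied to $p_i$ and one of its nonzero partial derivatives), giving an absolute constant bound on the number of such $x$. I expect the tangent-cone computation of the middle paragraph to be the only real obstacle: it is exactly what rules out $\Gamma$ having a squared factor through which many points of $S'$ could pass, which is the scenario that would otherwise break the B\'ezout count.
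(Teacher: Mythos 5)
Your proof is correct and follows essentially the same route as the paper's: the key point in both is that Lemma~\ref{curveeqn} gives $\psi_{xyzs}$ an ordinary double point with the two distinct tangents $\ker f_x$, $\ker g_x$ at $x$, which (via the multiplicativity of tangent cones in $\psi_{xyzs}=\phi\Gamma$) rules out $x$ lying on a repeated component of $\Gamma$, after which B\'ezout bounds the singular points of the squarefree part. Your write-up is a somewhat more carefully organized, direct version of the paper's argument by contradiction, making explicit the tangent-cone comparison that the paper only sketches.
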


\begin{proof}
Suppose that $V(\Gamma)$ has more than a constant number of double points. Then, since $\Gamma$ has degree at most six and any of its derivatives have degree at most five, B\'ezout's theorem implies that $\Gamma$ is reducible. Each irreducible subvariety of $V(\Gamma)$ has a finite number of double points. Distinct irreducible subvarieties of $\Gamma$ contain a constant number of points of $S'$ in their intersection, by B\'ezout's theorem. Therefore, $V(\Gamma)$ must contain repeated subvarieties in its decomposition into irreducible subvarieties containing more than a constant number of points of $S'$.

If $V(\Gamma)$ has a repeated subvariety, say $G^2 \mid \Gamma$ and $G$ vanishes at $x \in S'$, then $V(\psi_{xyzs})$ has a point of multiplicity at least three at $x$, since it has distinct tangents and for any tangent to $V(G)$ there corresponds a repeated tangent to $V(\psi_{xyzs})$. This would contradict Lemma \ref{curveeqn}.

Therefore $V(\Gamma)$ has a constant number of points of $S'$ in its repeated subvarieties, a contradiction.
\end{proof}

\begin{lemma} \label{degree6}
If $q$ is large enough, then the polynomial $\Gamma(X)$ does not have degree six.
\end{lemma}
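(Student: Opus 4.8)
The plan is to argue by contradiction: suppose $\Gamma(X)$ has degree six. By definition $\Gamma$ is the maximum common divisor of the subspace generated by the polynomials $\psi_{xyzs}$, so $\Gamma$ divides every $\psi_{xyzs}$ with $\{x,y,z,s\}\subset S'$. Each $\psi_{xyzs}$ has degree at most six by construction, and in fact has degree exactly six: with respect to the basis $\{x,y,z\}$, Lemma~\ref{curveeqn} shows that the part of $\psi_{xyzs}$ divisible by $X_1^4$ is $2s_2s_3 b_{zy}(x)\frac{g_s(x)}{g_x(s)}f_x(X)g_x(X)X_1^4$, and this is nonzero because $q$ is odd, because $b_{zy}(x)\neq 0$ by Lemma~\ref{boxlemma}, and because $s$, being a point of $S'$ distinct from $x$, lies on none of the lines $xz$, $xy$ (both bi-secants of $S$), the tangent to $S$ at $x$, or the $3$-secant to $S$ at $x$ (since $S'$ meets the part of $x$ in the $3$-secant partition only in $x$), so that $s_2,s_3,g_x(s),g_s(x)$ are all nonzero. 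Hence $\Gamma\mid\psi_{xyzs}$ together with $\deg\Gamma=\deg\psi_{xyzs}=6$ forces $\psi_{xyzs}=\lambda_{xyzs}\Gamma$ for a nonzero constant $\lambda_{xyzs}$, and therefore $V(\psi_{xyzs})=V(\Gamma)$ for every four-element subset $\{x,y,z,s\}$ of $S'$.

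Next I would apply Lemma~\ref{curveeqn} once more: for each $u\in\{x,y,z,s\}$ the point $u$ is a double point of $V(\psi_{xyzs})$. Under the running assumption that $S$ has fewer than $2q-c$ odd secants, Lemma~\ref{notconstant} together with the partition of $S_{4/3}$ into at least $\frac{1}{3}|S_{4/3}|$ parts by $3$-secants shows that $|S'|$ exceeds any prescribed constant; in particular $|S'|\geqslant 4$, so every point of $S'$ belongs to at least one four-element subset of $S'$. Combining this with the previous paragraph, $V(\Gamma)=V(\psi_{xyzs})$ has a double point at every point of $S'$, so $V(\Gamma)$ has more than a constant number of double points at points of $S'$. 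This contradicts Lemma~\ref{notdoubles}, and the contradiction completes the proof.

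The one step that requires genuine care is the verification that each $\psi_{xyzs}$ has degree precisely six, which is exactly what upgrades divisibility by $\Gamma$ to proportionality; the rest is bookkeeping built on Lemmas~\ref{curveeqn} and~\ref{notdoubles}. This is also where the hypothesis that $q$ is large enters: one needs $q$, and hence $|S'|$, to be larger than the constants produced in Lemmas~\ref{lotsofthem} and~\ref{notdoubles}, and at least the degree bound $d\leqslant q$ required by Lemma~\ref{gcdlemma}, so that having more than a constant number of double points among the points of $S'$ is a genuine violation of Lemma~\ref{notdoubles}.
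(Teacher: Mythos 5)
Your proposal is correct and follows the same route as the paper: degree six forces $\Gamma$ to be a scalar multiple of every $\psi_{xyzs}$ (the paper gets nonvanishing of $\psi_{xyzs}$, hence exact degree six by homogeneity, from the coefficient computation already recorded in the proofs of Lemmas~\ref{curve} and~\ref{curveeqn}), whence Lemma~\ref{curveeqn} gives a double point of $V(\Gamma)$ at every point of $S'$, contradicting Lemma~\ref{notdoubles}. Your extra verifications (that $s_2,s_3,g_x(s),g_s(x)\neq 0$ and that $|S'|$ exceeds any constant) are sound and simply make explicit what the paper leaves implicit.
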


\begin{proof}
If $\Gamma(X)$ is of degree six then it is a constant multiple of $\psi_{xyzs}$, for all subsets $\{x,y,z,s\}$ of $S'$. Lemma~\ref{curveeqn} implies that $V(\Gamma)$ has a double point at every point of $S'$, contradicting Lemma~\ref{notdoubles}.
\end{proof}

\begin{lemma} \label{degree5}
If $q$ is large enough, then the polynomial $\Gamma(X)$ does not have degree five.
\end{lemma}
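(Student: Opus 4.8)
The plan is to argue by contradiction, in the spirit of Lemma~\ref{degree6}, but now squeezing a geometric contradiction out of the extra linear factor. Suppose $\deg\Gamma=5$. Since $\Gamma$ divides each $\psi_{xyzs}$ and $\psi_{xyzs}$ has degree $6$, we may write $\psi_{xyzs}=\Gamma\cdot\ell_{xyzs}$ for a nonzero linear form $\ell_{xyzs}$, depending a priori on the $4$-subset $\{x,y,z,s\}\subseteq S'$.

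The first step is a local analysis at a point $u\in\{x,y,z,s\}$. By Lemma~\ref{curveeqn}, $\psi_{xyzs}$ has a double point at $u$ with the two \emph{distinct} tangents $\ker f_u$ and $\ker g_u$. Comparing multiplicities in $\psi_{xyzs}=\Gamma\cdot\ell_{xyzs}$, the multiplicity of $\Gamma$ at $u$ must be $1$ or $2$: it is not $0$ (otherwise $\psi_{xyzs}$ would have multiplicity at most $1$ at $u$) and it is not larger than $2$. If it is $2$, then $u$ is a double point of $V(\Gamma)$ (recall from Lemma~\ref{singular=double} that every singular point of $V(\Gamma)$ is a double point). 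If it is $1$, then $u$ is a smooth point of $V(\Gamma)$, $\ell_{xyzs}$ vanishes at $u$, and the tangent cone of the product at $u$ is the union of the tangent line $t_u$ to $V(\Gamma)$ at $u$ and the line $\ker\ell_{xyzs}$; hence $\{t_u,\ker\ell_{xyzs}\}=\{\ker f_u,\ker g_u\}$. In particular $\ker\ell_{xyzs}$ passes through $u$ and coincides with $\ker f_u$ or $\ker g_u$.

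The second ingredient is the defining property of $S'$: for distinct $u,v\in S'$ the line through $u$ and $v$ contains no further point of $S$. Therefore neither $\ker f_u$ (a tangent to $S$, meeting $S$ only in $u$) nor $\ker g_u$ (a $3$-secant to $S$ through $u$, meeting $S$ in two further points) can pass through $v$. Combining this with the previous step: a $4$-subset $T=\{x,y,z,s\}\subseteq S'$ can contain at most one smooth point of $V(\Gamma)$, for if $x$ and $y$ were both smooth points of $V(\Gamma)$ then $\ell_T$ would vanish at both, forcing $\ker\ell_T$ to be the line through $x$ and $y$, contradicting $\ker\ell_T\in\{\ker f_x,\ker g_x\}$. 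So every $4$-subset of $S'$ contains at least three points that are not smooth points of $V(\Gamma)$ --- that is, points lying off $V(\Gamma)$ or double points of $V(\Gamma)$. But by Lemma~\ref{lotsofthem} only a constant number of points of $S'$ lie off $V(\Gamma)$, and by Lemma~\ref{notdoubles} only a constant number of them are double points of $V(\Gamma)$; so, for $q$ large enough, $S'$ contains at least four points that are smooth points of $V(\Gamma)$, and any $4$-subset of these contradicts what was just proved. Hence $\deg\Gamma\neq5$.

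I expect the delicate point to be the local computation of the second paragraph: namely, that at a smooth point $u$ of $V(\Gamma)$ the factor $\ell_{xyzs}$ is forced to vanish and to cut out exactly one of the two prescribed node tangents of $\psi_{xyzs}$. Everything else is the by-now standard combination of Lemmas~\ref{curveeqn}, \ref{lotsofthem} and \ref{notdoubles} with the combinatorial property of $S'$. (In fact the same local analysis shows $S'\subseteq V(\Gamma)$ when $\deg\Gamma=5$, so Lemma~\ref{lotsofthem} could here be replaced by this stronger statement, though it is not needed.)
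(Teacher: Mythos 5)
Your proof is correct and follows essentially the same route as the paper's: both factor $\psi_{xyzs}=\Gamma\cdot\ell$, compare multiplicities at the four distinguished double points of $V(\psi_{xyzs})$ (where $\Gamma$ can only contribute multiplicity one or two), and conclude via Lemma~\ref{notdoubles}. The only difference is cosmetic and lies in the last step: the paper fixes two simple points $u,v$, notes $\ker\ell$ must be the line $uv$, and so forces double points of $V(\Gamma)$ at all the remaining points of $S'\cap V(\Gamma)$, whereas you identify $\ker\ell$ with one of the node tangents $\ker f_u$, $\ker g_u$ at each smooth point and obtain an immediate contradiction from two smooth points in a single quadruple.
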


\begin{proof}
Suppose that $u,v \in S'$ are simple points of $V(\Gamma)$ and that $\Gamma$ is of degree five. Then $V(\psi_{xyuv})$ contains a line, for all subsets $\{x,y,u,v\}$ of $S'\cap V(\Gamma)$ and this line is zero at $u$ and $v$, since $V(\psi_{xyuv})$ has double points at $u$ and $v$ and $V(\Gamma)$ does not. But then $V(\Gamma)$ has double points at $x$ and $y$, so all other points of $S' \cap V(\Gamma)$, contradicting Lemma~\ref{notdoubles}.
\end{proof}

\begin{lemma} \label{degree4}
If $q$ is large enough, then the polynomial $\Gamma(X)$ does not have degree four.
\end{lemma}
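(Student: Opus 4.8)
The plan is to exploit the factorisation forced by the degree of $\Gamma$. Suppose $\Gamma$ has degree four. Then for every $4$-subset $\{x,y,z,s\}\subseteq S'$ we may write $\psi_{xyzs}=\Gamma\cdot Q_{xyzs}$ with $Q_{xyzs}$ a nonzero quadratic form. Let $T$ be the set of points of $S'\cap V(\Gamma)$ that are simple points of $V(\Gamma)$; by Lemmas~\ref{lotsofthem}, \ref{singular=double} and~\ref{notdoubles} we have $|T|\geq|S'|-c_1$ for an absolute constant $c_1$, so $T$ is large under the standing assumption. The key point is a rigidity statement: for $p\in T$ contained in the $4$-subset, $\Gamma$ has multiplicity $1$ at $p$ while $\psi_{xyzs}$ has multiplicity $2$ with tangent cone $\ker f_p\cup\ker g_p$ by Lemma~\ref{curveeqn}; hence $Q_{xyzs}$ has multiplicity $1$ at $p$, so it passes through $p$, and its tangent there is the unique line $\ell_p\in\{\ker f_p,\ker g_p\}$ distinct from the tangent $m_p$ to $V(\Gamma)$ at $p$. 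Crucially $\ell_p$ and $m_p$ depend only on $p$ and $\Gamma$, not on the chosen $4$-subset.

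Next I would fix three points $u,v,w\in T$, which are not collinear since they lie in $S'$, and show there is a \emph{unique} conic through $u,v,w$ tangent to $\ell_u,\ell_v,\ell_w$ at these points. If two such conics existed they would span a pencil of conics, each member passing through $u,v,w$ with these tangents; such a pencil has no fixed line component (else $u,v,w$ would be collinear, using also that $\ell_p$ is never the line joining two points of $S'$), so its base scheme has length $4$ by B\'ezout, yet it has length at least $2$ at each of $u,v,w$, a contradiction. Therefore $Q_{uvwx}=C$ for one fixed conic $C$, for every $x\in T\setminus\{u,v,w\}$; equivalently $\psi_{uvwx}=\Gamma C$ is independent of $x$. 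Since $\psi_{uvwx}$ vanishes at $x$ with multiplicity $2$ and $\Gamma$ is simple at $x$, the conic $C$ must pass through $x$; hence $V(C)$ contains at least $|T|-3$ points of $S'$.

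As any line meets $S'$ in at most two points, a reducible conic meets $S'$ in at most four points, so $C$ is irreducible. Then $V(C)\cap V(\Gamma)$ contains more than $8$ points, so B\'ezout forces $C\mid\Gamma$, say $\Gamma=C\cdot R$ with $\deg R=2$. Now take $x\in T$ with $x\notin V(R)$; all but a constant number of points of $T$ qualify, the exceptions being singular points of $V(\Gamma)$, of which there are only constantly many on $S'$ by Lemmas~\ref{singular=double} and~\ref{notdoubles}. For such $x$ the tangent to $V(\Gamma)$ at $x$ equals the tangent to $V(C)$ at $x$, which is the tangent to $V(Q_{uvwx})$ at $x$, namely $\ell_x$; but the tangent to $V(\Gamma)$ at $x$ is $m_x\neq\ell_x$, a contradiction. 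Once the constant $c$ of Theorem~\ref{main} is chosen large enough, $|S'|$ exceeds all the constants above, so such $x$ exists and we reach a contradiction. Hence $\Gamma$ does not have degree four.

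I expect the main work to be the rigidity step of the first two paragraphs: checking carefully that $Q_{xyzs}$ acquires a well-defined tangent line at each point of $T$ that is independent of the $4$-subset (this is the analogue, for a quadratic cofactor, of the observation in the degree five case that the linear cofactor is the line $uv$), and that three non-collinear points together with prescribed tangent lines determine a conic uniquely, where the degenerate possibility of a pencil with a fixed line component has to be excluded using the defining property of $S'$. Everything after that is B\'ezout bookkeeping of the same flavour as in the proofs of Lemmas~\ref{degree6} and~\ref{degree5}.
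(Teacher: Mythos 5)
Your proof is correct and ends up in the same place as the paper's --- the quadratic cofactor of $\Gamma$ in $\psi_{xyzs}$ is forced to be one fixed conic $C$ containing almost all of $S'$, B\'ezout then gives $C \mid \Gamma$, and a tangency clash finishes --- but the middle of your argument is organised differently. The paper splits into cases according to whether the cofactor conic is non-degenerate or degenerate: in the first case it pigeonholes over the at most eight conics through $x,y,z$ having tangent $\ker f_u$ or $\ker g_u$ at each $u$ (so it only gets a fixed conic for ``more than a constant number of $s$''), and in the second it observes that one of the two lines would be a $\ker g_u$ joining two points of $S'$. You instead sharpen the rigidity first: at a simple point $p$ of $V(\Gamma)$, the tangent-cone factorisation of $\psi_{xyzs}=\Gamma Q_{xyzs}$ together with Lemma~\ref{curveeqn} pins the tangent of $Q_{xyzs}$ at $p$ down to the unique member $\ell_p$ of $\{\ker f_p,\ker g_p\}$ other than the tangent of $V(\Gamma)$, after which three points with three prescribed tangents determine the conic outright; your uniqueness lemma absorbs the degenerate case via the same ``$\ker g_u$ is not a bisecant'' fact. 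This eliminates the pigeonhole and the constant-counting, and your final contradiction occurs at a simple point of $V(\Gamma)$ (where $m_x=\ell_x$ is impossible) rather than at a double point with a repeated tangent. Two details to write out when polishing: the fact that the tangent cone of a product is the product of the tangent cones, which is what places the tangent of $\Gamma$ at $p$ inside $\{\ker f_p,\ker g_p\}$ to begin with; and in the last step you can note that \emph{every} $x\in T$ avoids $V(R)$ automatically, since a point of $V(C)\cap V(R)$ is singular on $V(\Gamma)=V(CR)$ and hence not in $T$.
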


\begin{proof}
Define $\phi_{xyzs}(X)$ by $\psi_{xyzs}(X)=\Gamma(X)\phi_{xyzs}(X)$. 

Let $D$ be the set of double points of $V(\Gamma)$. By Lemma~\ref{notdoubles}, $|D|$ is constant. 

Suppose that there is an $x,y,z \in S' \setminus D$ for which $V(\phi_{xyzs})$ is a non-degenerate conic containing $x,y,z$ and $s$, for more than a constant number of $s \in S'$. The tangents at $u \in \{x,y,z\}$ to $V(\psi_{xyzs})$ are the kernels of $f_u(X)$ and $g_u(X)$. There are 8 possible conics which are zero at $x,y,z$ and have tangents either $f_u(X)$ or $g_u(X)$ at $u \in \{x,y,z \}$. Hence, there is a conic $V(\phi)$ and a subset $S''$ of $S'$, such that $V(\phi_{xyzs})=V(\phi)$ for all  $x,y,z,s \in S''$, and where $S''$ consists of more than a constant number of points of $S'$. By Lemma~\ref{lotsofthem}, $V(\Gamma)$ contains $|S'|-c$ points of $S'$, so B\'ezout's theorem implies that $V(\phi)$ is a subvariety of $V(\Gamma)$. But then, for $s \in S''$,  $V(\psi_{xyzs})$ has double points at $x,y,z,s$ with repeated tangents. By Lemma~\ref{curveeqn}, $x$ is a double point of $V(\psi_{xyzs})$ with tangents $f_x(w)$ and $g_x(w)$, and similarly for $y$, $z$ and $s$, a contradiction.

Therefore, for all $x,y,z \in S' \setminus D$, the variety $V(\phi_{xyzs})$ is a degenerate conic for more than a constant number of points $s \in S' \setminus  D$. The two lines in $V(\phi_{xyzs})$ are the tangents to the curve $V(\psi_{xyzs})$. Since these two lines contain the four points $x,y,z,s$ one of them must be the line defined by $g_x$, $g_y$, or $g_z$, contradicting the fact that the points of $S'$ are joined by lines which are not $3$-secants to $S$.
\end{proof}

\begin{lemma} \label{degree3}
If $q$ is large enough, then the polynomial $\Gamma(X)$ is not irreducible of degree three.
\end{lemma}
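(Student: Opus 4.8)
The plan is to assume $\Gamma$ is irreducible of degree three and derive a contradiction by restricting the curves $\psi_{xyzs}$ to coordinate lines. Since $\psi_{xyzs}\not\equiv 0$ has degree six, writing $\psi_{xyzs}=\Gamma\phi_{xyzs}$ gives $\deg\phi_{xyzs}=3$. An irreducible plane cubic has at most one singular point, so the set $D$ of double points of $V(\Gamma)$ has $|D|\leq 1$, and by Lemma~\ref{lotsofthem} the set $T=(S'\cap V(\Gamma))\setminus D$ has at least $|S'|-c$ points, hence more than any prescribed constant in the situation at hand. I would first record the local behaviour of $\phi_{xyzs}$ at a point $u\in T$. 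By Lemma~\ref{curveeqn}, $V(\psi_{xyzs})$ has a double point at each $u\in\{x,y,z,s\}$, with tangent cone the two distinct lines $\ker f_u$ and $\ker g_u$. As $V(\Gamma)$ is smooth at $u\in T$, with tangent line $\ell_u$ say, the equality $\psi_{xyzs}=\Gamma\phi_{xyzs}$ forces $\ell_u\in\{\ker f_u,\ker g_u\}$, forces $u$ to be a \emph{simple} point of $V(\phi_{xyzs})$ (a double point there would make $\psi_{xyzs}$ triple at $u$), and makes the tangent to $V(\phi_{xyzs})$ at $u$ the remaining line $m_u\in\{\ker f_u,\ker g_u\}\setminus\{\ell_u\}$. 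Since each of $\ell_u,m_u$ is a tangent or a $3$-secant of $S$, and no such line meets $S'$ in more than one point (a tangent meets $S$ in one point; a $3$-secant meets $S'$ in one point, as no $3$-secant of $S$ joins two points of $S'$), neither $\ell_u$ nor $m_u$ is incident with a point of $S'$ other than $u$; in particular, for $y,z\in T$ the line $yz$ is none of $\ell_y,\ell_z,m_y,m_z$.

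The heart of the argument is a computation on a coordinate line. I would fix $x,y\in T$, and for $z\in T\setminus\{x,y\}$ and $s\in T\setminus\{x,y,z\}$ work in the basis $\{x,y,z\}$ and restrict $\psi_{xyzs}$ to the line $X_1=0$, which is the line $yz$. Using Lemma~\ref{segre} one finds that $b_{yz}(0,X_2,X_3)$ is a nonzero scalar multiple of $X_2X_3$ --- this is where $q$ odd enters --- so the formula of Lemma~\ref{curve} yields $\psi_{xyzs}(0,X_2,X_3)=\mu\,X_2^2X_3^2\,b_{xs}(0,X_2,X_3)$ with $\mu\neq 0$. Comparing this with $\psi_{xyzs}(0,X_2,X_3)=\Gamma(0,X_2,X_3)\,\phi_{xyzs}(0,X_2,X_3)$: the line $yz$ is a component of neither $V(\Gamma)$ (an irreducible cubic) nor $V(\phi_{xyzs})$ (otherwise $y$ would not be a simple point of $V(\phi_{xyzs})$ with tangent $m_y\neq yz$), the binary cubic $\Gamma(0,X_2,X_3)$ has simple roots at $y$ and $z$ (since $yz\neq\ell_y,\ell_z$) together with a third, distinct point $R_{yz}$, and $\phi_{xyzs}(0,X_2,X_3)$ has simple roots at $y$ and $z$ (since $yz\neq m_y,m_z$). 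Hence the binary quadratic $b_{xs}(0,X_2,X_3)$ must vanish at $R_{yz}$; that is, $b_{xs}(R_{yz})=0$.

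Finally I would fix $s\in T\setminus\{x,y\}$ and let $z$ range over $T\setminus\{x,y,s\}$. The points $R_{yz}$ are pairwise distinct points of $V(\Gamma)$: if $R_{yz}=R_{yz'}$ then the lines $yz$ and $yz'$ coincide, so $y,z,z'$ are collinear, contradicting the fact that a line through two points of $S'$ is incident with no further point of $S$. Therefore the conic $V(b_{xs})$ is incident with more than a constant number of points of $V(\Gamma)$. But $b_{xs}\not\equiv 0$ --- if $b_{xs}\equiv 0$ then $\ker f_x=\ker f_s$, forcing $x=s$ --- and $\Gamma$ is an irreducible cubic, so $V(b_{xs})$ and $V(\Gamma)$ share no irreducible component; B\'ezout's theorem then bounds $|V(b_{xs})\cap V(\Gamma)|$ by six, a contradiction since $|S'|$, and hence $|T|$, is larger than any prescribed constant in the situation considered.

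The step I expect to be most delicate is the middle one: pinning down $\psi_{xyzs}(0,X_2,X_3)$ precisely --- that it is $X_2^2X_3^2$ times a nonzero multiple of $b_{xs}(0,X_2,X_3)$ --- and checking that both $\Gamma$ and $\phi_{xyzs}$ cut the line $yz$ transversally at $y$ and at $z$, so that the residual intersection point $R_{yz}$ is genuinely forced to be a zero of $b_{xs}$ and is not absorbed into a tangency at $y$ or $z$. The remainder is bookkeeping with constants, using that $|S'|$ exceeds any fixed bound when $q$ is large enough.
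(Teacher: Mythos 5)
Your proof is correct, and it takes a genuinely different route from the paper's. The paper works with the explicit shape $\Gamma=c_1X_1^2h_x^+(X)+c_2X_2^2h_y^+(X)+c_3X_3^2h_z^+(X)+c_4X_1X_2X_3$ of an irreducible cubic through $x,y,z$ with prescribed tangents, compares the coefficient of $X_1^3X_2^3$ (and the degree-four terms from Lemma~\ref{curveeqn}) in $\psi_{xyzs}$ and in $\Gamma\phi_{xyzs}$ as $s$ varies, and packages the resulting identity into a quartic $\rho_{12}$ vanishing on almost all of $S'$; B\'ezout then forces $\rho_{12}=\alpha_{12}\Gamma$, and matching the terms divisible by $X_1^2$ eventually yields $b_{xy}(z)=0$, contradicting Lemma~\ref{boxlemma}. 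You instead restrict $\psi_{xyzs}$ to the line $yz$: since $b_{yz}(0,X_2,X_3)=-2f_z(y)g_y(z)X_2X_3\neq 0$ (by Lemma~\ref{segre} and $q$ odd, with $f_z(y)\neq 0$ as tangents are unisecants and $g_y(z)\neq 0$ as no $3$-secant joins two points of $S'$), the formula of Lemma~\ref{curve} gives $\psi_{xyzs}(0,X_2,X_3)=\mu X_2^2X_3^2\,b_{xs}(0,X_2,X_3)$ with $\mu\neq 0$ (here $s_1\neq 0$ because $s$ does not lie on the bisecant $yz$, and $b_{xs}(0,X_2,X_3)\not\equiv 0$ because $b_{xs}(y)\neq 0$ by Lemma~\ref{boxlemma}); comparing with $\Gamma(0,\cdot)\,\phi_{xyzs}(0,\cdot)$, where your tangent-cone analysis correctly shows both binary cubics have simple roots at $y$ and $z$, forces the residual intersection point $R_{yz}$ of the line $yz$ with the cubic to lie on the conic $V(b_{xs})$. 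Varying $z$ produces more than six pairwise distinct points of $V(b_{xs})\cap V(\Gamma)$, against B\'ezout, since the irreducible cubic $V(\Gamma)$ shares no component with a conic. All the delicate points you flag do go through, and the argument is shorter and more geometric than the paper's coefficient extraction; what the two proofs share is the essential input (Lemmas~\ref{segre}, \ref{curve}, \ref{curveeqn}, \ref{boxlemma} and B\'ezout) and the standing hypothesis that $|S'|$ exceeds any prescribed constant when $q$ is large.
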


\begin{proof}
Let $D$ be the set of double points of $V(\Gamma)$ and let $x,y,z \in S' \setminus  D$.

Let $\phi_{xyzs}(X)$ be defined by $\psi_{xyzs}(X)=\Gamma(X)\phi_{xyzs}(X)$.

By Lemma~\ref{curveeqn}, $x$ is a double point of $V(\psi_{xyzs})$ with tangents $f_x(w)$ and $g_x(w)$, and similarly for $y$, $z$ and $s$.

Since $V(\Gamma)$ has degree three and simple zeros at $x$, $y$ and $z$, we have that
$$
\Gamma(X)=c_1X_1^2h_x^+(X)+c_2X_2^2h_y^+(X)+c_3X_3^2h_z^+(X)+c_4X_1X_2X_3.
$$
Furthermore, we have that $h_x^+(X)$ is either $f_x(X)$ or $g_x(X)$ and similarly for $h_y^+(X)$ and $h_z^+(X)$.

By Lemma~\ref{curveeqn}, the terms of $\psi_{xyzs}(X)$ which are of degree 4 in one of the variables are 
$$
2s_2s_3 b_{zy}(x)f_x(X)g_x(X)\frac{g_s(x)}{g_x(s)} X_1^4+2s_1s_3 b_{xz}(y)f_y(X)g_y(X)\frac{g_s(y)}{g_y(s)} X_2^4$$
$$
+2s_1s_2 b_{yx}(z)f_z(X)g_z(X)\frac{g_s(z)}{g_z(s)} X_3^4.
$$
Hence, we have that for some $d(s)$,
$$
\phi_{xyzs}(X)=2c_1^{-1}b_{zy}(x)s_2s_3\frac{g_s(x)}{g_x(s)}X_1^2h_x^-(X)+2c_2^{-1}b_{xz}(y)s_1s_3\frac{g_s(y)}{g_y(s)}X_2^2h_y^-(X)
$$
$$
+2c_3^{-1}b_{yx}(z)s_1s_2\frac{g_s(z)}{g_z(s)}X_3^2h_z^-(X)+d(s)X_1X_2X_3,
$$
where $h_x^+h_x^-=f_xg_x$, etc.

The coefficient of $X_1X_2$ in $b_{zs}(X)=f_z(X)g_s(X)-g_z(X)f_s(X)$ is 
$$
f_z(x)g_s(y)-g_z(x)f_s(y)+f_z(y)g_s(x)-g_z(y)f_s(x).
$$

Therefore, by Lemma~\ref{curve} and Lemma~\ref{segre}, the coefficient of $X_1^3X_2^3$ of $\psi_{xyzs}(X)$ is
$$
2s_3f_x(y)g_y(x)(f_z(x)g_s(y)-g_z(x)f_s(y)+f_z(y)g_s(x)-g_z(y)f_s(x)).
$$
The coefficient of $X_1^3X_2^3$ in $\Gamma(X)\phi_{xyzs}(X)$ is
$$
2s_3\Big(b_1s_1\frac{g_s(y)}{g_y(s)}+b_2s_2\frac{g_s(x)}{g_x(s)}\Big),
$$
for some $b_i$ dependent only on $x,y,z$.
Equating these two expressions and applying Lemma~\ref{segre}, we have
$$
\frac{g_s(y)}{g_y(s)}(f_x(y)g_y(x)(f_z(x)g_y(s)+g_z(x)f_y(s))-b_1s_1)
$$
$$
=-\frac{g_s(x)}{g_x(s)}(f_x(y)g_y(x)(f_z(y)g_x(s)+g_z(y)f_x(s))-b_2s_2).
$$
By Lemma~\ref{gfunc},
$$
\frac{g_s(y)}{g_y(s)}b_{yz}(s)s_2=-\frac{g_s(x)}{g_x(s)}b_{xz}(s)s_1.
$$

Thus, combining these last two equations we have that
$$
\rho_{12}(s)=s_1b_{xz}(s)(f_x(y)g_y(x)(f_z(x)g_y(s)+g_z(x)f_y(s))-b_1s_1)
$$
$$
-s_2b_{yz}(s)(f_x(y)g_y(x)(f_z(y)g_x(s)+g_z(y)f_x(s)-b_2s_2)=0.
$$
The coefficient of $X_1^2X_3^2$ in $\rho_{12}(X)$ is 
$$
2f_x(z)g_z(x)f_x(y)g_y(x)(f_z(x)g_y(z)+g_z(x)f_y(z)).
$$ 
By Lemma~\ref{segre},
$$
f_z(x)g_y(z)+g_z(x)f_y(z)=(f_x(z)g_y(z)-g_x(z)f_y(z))\frac{f_z(x)}{f_x(z)}=b_{xy}(z)\frac{f_z(x)}{f_x(z)}.$$

By Lemma~\ref{boxlemma}, we have that $\rho_{12} \neq 0$. Thus, we get a curve $V(\rho_{12})$ of degree four which contains at least $|S'|-c$ points of $S'$.

Since $V(\Gamma)$ is an irreducible curve of degree three containing at least $|S'|-c$ points of $S'$, B\'ezout's theorem and Lemma~\ref{lotsofthem} imply that $\rho_{12}(X)$ is a multiple of $\Gamma(X)$, i.e. $\rho_{12}(X)=\alpha_{12}(X) \Gamma(X)$, for some linear form $\alpha_{12}(X)$.

The terms divisible by $X_1^3$ in $\rho_{12}(X)$ are $a_1(g_z(x)f_x(X)-f_z(x)g_x(X))$ for some $a_1$ dependent only on $x,y,z$.

The terms divisible by $X_1^2$ in $\Gamma$ are $c_1h_x^+(X)$. Therefore, in the product $\alpha_{12}(X)\Gamma(X)$, the terms divisible by $X_1^3$ are a multiple of $h_x^+(X)$. But $h_x^+(X)$ is either $f_x(X)$ or $g_x(X)$, which implies $f_x(X)$ and $g_x(X)$ are multiples of each other, which they are not, or $a_1=0$.

Hence, $a_1=0$ and similarly $a_2=0$, which implies that $\rho_{12}(X)=a_3 X_3 \Gamma (X)$, for some $a_3 \in {\mathbb F}_q$.

Note that $a_1=0$ and $a_2=0$ imply that 
$$
\rho_{12}(X)=f_x(y)g_y(x)X_3((f_z(x)g_y(z)+g_z(x)f_y(z))b_{xz}(X)X_1
$$
$$
-(f_z(y)g_x(z)+g_z(y)f_x(z))b_{yz}(X)X_2).
$$
The terms divisible by $X_1^2X_3$ in $\rho_{12}(X)$ are
$$
f_x(y)g_y(x)(f_z(x)g_y(z)+g_z(x)f_y(z))(g_z(x)f_x(X)-f_z(x)g_x(X)).
$$
The terms divisible by $X_1^2$ in $\Gamma$ are $c_1h_x^+(X)$, so again we have that 
$f_x(X)$ and $g_x(X)$ are multiples of each other, which they are not, or $f_z(x)g_y(z)+g_z(x)f_y(z)=0$. The last equation and Lemma~\ref{segre} imply that $b_{xy}(z)=0$, contradicting Lemma~\ref{boxlemma}.
\end{proof}

\begin{lemma} \label{gsconcur}
Suppose that $x,y,z$ are points of a $S_{4/3}$ which lie on a conic $C$ whose tangent at $u \in \{x,y,z \}$ is the kernel of $f_u(X)$.  Then the lines $\ker g_x$, $\ker g_y$ and $\ker g_z$ are concurrent with a point. Furthermore, if the line joining $x$ and $y$ is $\ker g_x$ ($=\ker g_y$) then the line $\ker g_z$ is incident with $S\cap \ker g_x \setminus \{x,y \}$.
\end{lemma}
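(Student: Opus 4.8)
The plan is to work in the frame where $x=(1,0,0)$, $y=(0,1,0)$, $z=(0,0,1)$ and to apply Segre's lemma of tangents, exactly in the form used in the proofs of Lemma~\ref{szero} and Lemma~\ref{segre}, from each of these three points. First I would rule out that $C$ is reducible: then two of $x,y,z$ would lie on a common line component $\ell$ of $C$, so $\ell$ would be the tangent to $C$ at one of them while passing through the other, which is impossible since $\ker f_u$ is a tangent to $S$ and so meets $S$ only in $u$. Hence $C$ is irreducible, and in the chosen frame it has an equation $\alpha X_2X_3+\beta X_1X_3+\gamma X_1X_2=0$ with $\alpha\beta\gamma\neq 0$; a gradient computation at the vertices then lets me take $f_x=\gamma X_2+\beta X_3$, $f_y=\gamma X_1+\alpha X_3$, $f_z=\beta X_1+\alpha X_2$, so that $f_x(y)=f_y(x)$, $f_y(z)=f_z(y)$, $f_z(x)=f_x(z)$, and consequently
\[
\frac{f_x(z)\,f_y(x)\,f_z(y)}{f_x(y)\,f_y(z)\,f_z(x)}=1 .
\]
Since each $g_u$ vanishes at $u$, the coefficient matrix of the forms $g_x,g_y,g_z$ has zero diagonal and determinant $g_x(y)g_y(z)g_z(x)+g_x(z)g_y(x)g_z(y)$; so $\ker g_x,\ker g_y,\ker g_z$ are concurrent if and only if this expression vanishes.

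For the generic configuration --- when none of $xy$, $yz$, $zx$ is a $3$-secant of $S$, so that all three are bi-secants --- I would run the tangent computation from each of $x$, $y$, $z$, in each case using the other two points in the role of the two reference vertices there (so, from $x$, the point $z$ plays the role of the point called $e$ in Lemma~\ref{segre}). This produces three identities, each expressing a product $\prod_{s\in S\setminus\{x,y,z\}}(-s_i/s_j)$, with $\{i,j\}\subset\{1,2,3\}$, as $-1$ times a ratio of values of $f$ and $g$. Multiplying them, the left-hand side is $(-1)^{\lvert S\setminus\{x,y,z\}\rvert}=(-1)^{q-1}=1$ since $q$ is odd, and after cancelling the $f$'s by the displayed identity one is left with $g_x(z)g_y(x)g_z(y)=-\,g_x(y)g_y(z)g_z(x)$, which is precisely the concurrency condition.

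It remains to treat the case where one of the three lines is a $3$-secant; as $C$ is irreducible the points $x,y,z$ are not collinear, so at most one of them can be, say $xy=\ker g_x=\ker g_y$, and let $p$ be the third point of $S$ on $xy$. Here concurrency is automatic, the common point lying on $xy$, and the content is that this point is $p$. I would refine the frame so that, in addition, $p=(1,1,0)$; then $g_x$ and $g_y$ are multiples of $X_3$ and I must show $g_z(x)=-g_z(y)$. Running the argument from $z$ exactly as in the generic case (the $3$-secant $\ker g_z$ still misses $x$ and $y$, and $zx,zy$ are bi-secants) yields $\prod_{s\in S\setminus\{x,y,z\}}(-s_1/s_2)=-\beta g_z(y)/(\alpha g_z(x))$. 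For the arguments from $x$ and from $y$ I would instead delete the point $p$ as well: the key difference from the generic bookkeeping is that the $3$-secant through the vertex is now $xy$, which among the joins to $S\setminus\{x,y,z,p\}$ is met only once (by the join to $p$, which has been removed), so the lines through $x$ not met are its tangent together with $xz$, not the tangent alone. Carrying this out gives $\prod_{s\in S\setminus\{x,y,z,p\}}(-s_2/s_3)=-\gamma/\beta$ and $\prod_{s\in S\setminus\{x,y,z,p\}}(-s_3/s_1)=-\alpha/\gamma$; multiplying these with $\prod_{s\in S\setminus\{x,y,z,p\}}(-s_1/s_2)$ and using $(-1)^{q-2}=-1$ evaluates the last product as $-\beta/\alpha$, so reinserting the deleted factor gives $\prod_{s\in S\setminus\{x,y,z\}}(-s_1/s_2)=\beta/\alpha$. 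Comparing with the identity from $z$ forces $g_z(x)=-g_z(y)$, so $\ker g_z=\ker(X_1-X_2)$ passes through $p=(1,1,0)$. Thus all three of $\ker g_x,\ker g_y,\ker g_z$ pass through $p$, which is $S\cap\ker g_x\setminus\{x,y\}$, and both assertions follow.

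The step I expect to be the real obstacle is the bookkeeping in this last case: once the $3$-secant at a vertex passes through a second of the three chosen points, the line count behind Lemma~\ref{segre} (in which the $3$-secant is met twice) no longer applies, and one must recount, over the pencil through that vertex, exactly which lines are covered by the joins to $S$ after deleting the appropriate points, and with what multiplicities, before the Wilson-type product can be evaluated. Everything else is the standard tangent computation together with the explicit form of the conic $C$.
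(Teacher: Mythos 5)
Your proposal is correct and follows essentially the same route as the paper: multiply the three Segre-type products at $x,y,z$ over $S$ (deleting the third point of the $3$-secant in the degenerate case), cancel the $f$'s against the relation $f_x(z)f_y(x)f_z(y)=f_x(y)f_y(z)f_z(x)$ coming from the conic, and read off concurrency from the vanishing of $g_x(y)g_y(z)g_z(x)+g_x(z)g_y(x)g_z(y)$. The only (harmless) deviations are that you obtain the $f$-relation from the explicit equation of $C$ rather than by rerunning the tangent computation on $C$ itself, and that you normalise the third point of the $3$-secant to $(1,1,0)$ where the paper keeps it as $(1,a,0)$.
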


\begin{proof}
If the line joining $x$ and $y$ is not $\ker g_x$ then, as in the proof of Lemma~\ref{segre}, we have that
$$
g_x(y)f_x(z)g_y(z)f_y(x)g_z(x)f_z(y)=-g_x(z)f_x(y)g_y(x)f_y(z)g_z(y)f_z(x).
$$
Again, as in the proof of Lemma~\ref{segre} but using the conic $C$ in place of the $S$, we have that
$$
f_x(z)f_y(x)f_z(y)=f_x(y)f_y(z)f_z(x),
$$
from which we deduce that
$$
g_x(z)g_y(x)g_z(y)=-g_x(y)g_y(z)g_z(x).
$$
This is precisely the condition that the point $(g_y(z)g_x(y),g_x(z)g_y(x),-g_x(y)g_y(x))$ is incident with $\ker g_x$, $\ker g_y$ and $\ker g_z$.

If the line joining $x$ and $y$ is $\ker g_x$ then let $(1,a,0)$ be the third point of $S$ on the line $\ker g_x$, coordinates with respect to the basis $\{x,y,z \}$. As in the proof of Lemma~\ref{segre}, using $S \setminus \{(1,a,0) \}$ instead of $S$, we have that
$$
f_x(z)f_y(x)g_z(x)f_z(y)=-af_x(y)f_y(z)g_z(y)f_z(x).
$$
As before, we have that
$$
f_x(z)f_y(x)f_z(y)=f_x(y)f_y(z)f_z(x).
$$
Therefore, $g_z(x)=-ag_z(y)$, which is precisely the condition that the point $(1,a,0)$ is incident with $\ker g_z=\ker (g_z(x)X_1+g_z(y)X_2)$.

\end{proof}

\begin{proof} (of Theorem~\ref{main})
It is enough to prove the result for $q$ large enough, say $q>q_0$, such that all of our previous lemmas hold, since for $q<q_0$ the number of odd-secants is clearly less than $2q-c$ with $c=2q_0$.

The 3-secants to $S$ partition $S_{4/3}$ into at least $\frac{1}{3}|S_{4/3}|$ parts, where two points are in the same part if and only if they are joined by a $3$-secant to $S$. Let $S'$ be a subset of $S_{4/3}$ of at least $\frac{1}{3}|S_{4/3}|$ points, which does not contain two points joined by a $3$-secant to $S$, so no two points from the same part of the partition.

By Lemma~\ref{degree6}, Lemma~\ref{degree5}, Lemma~\ref{degree4} and Lemma~\ref{degree3}, all but at most $c'$ points of $S'$ are contained in a conic $C \subseteq V(\Gamma)$, for some constant $c'$. Since $C \subseteq V(\Gamma)$ and $V(\Gamma) \subseteq V(\psi_{xyzs})$, Lemma~\ref{curveeqn} implies that the tangent to the conic at the point $x \in S' \cap  C$ is either the kernel of $f_x$ or the kernel of $g_x$. 

Let $X$ be the subset of $S_{4/3}$ containing all the points of $S_{4/3} \setminus C$.

Redefine $S'$ to be a subset of $S_{4/3}$ taking a point $x \in X$ from each part of the partition of $S_{4/3}$ by the $3$-secants and at least $c'+5$ points from $S \cap C$. Note that $|X| \leqslant 3|S' \cap X|$. 
Applying the same lemmas to this re-defined $S'$, we have that all but at most $c'$ points of $S'$ are contained in a conic $C'$. But $C$ and $C'$ share at least five points, so are the same. Therefore, all but at most $c'$ points of $S'$ are contained in $C$. However, the points of $S' \cap X$ are not in $C$, so we have $|S' \cap X| \leqslant c'$ and so $|X| \leqslant 3c'$. Therefore, all but at most $3c'$ points of $S_{4/3}$ are contained in the conic $C$.

Recall that, for any $x \in S$, the {\em weight} of $x$ is
$$
w(x) = \sum \frac{1}{| \ell \cap S|},
$$
where the sum is over the lines $\ell$ incident with $x$ and an odd number of points of $S$.

By Lemma~\ref{szero}, there are at most two points of weight zero.

Observe that the number of odd secants is $\sum_{x \in S} w(x).$

Let $T$ be the subset of $C \cap S_{4/3}$ such that the tangent to $C$ at the point $x$ is $\ker g_x$. Let $T'$ be the set of points of $S$ which are joined to a point of $T$ by a $3$-secant. For all $y \in T' \setminus S_{4/3}$, the weight $w(y) \geqslant \frac{8}{3}$. And note that $|T' \cap S_{4/3}| \leqslant 3c'$, since $|S_{4/3} \setminus C | \leqslant 3c'$. If $y \in T' \setminus S_{4/3}$ then $y$ is incident with at most two lines $\ker g_x$, where $x \in T$, since $\ker g_x$ is a tangent to $C$ for $x \in T$. Hence, counting pairs $(y, \ell)$ where $y \in  T'$ is incident with $\ell$, a $3$-secant incident with a point of $T$, we have 
$$
2|T' \setminus S_{4/3}|+3c' \geqslant 2|T|.
$$

By Lemma~\ref{gsconcur}, there is a point $m$, which is incident with $\ker g_x$, for all $x \in (C \cap S_{4/3}) \setminus T$. 

Suppose $m \in S$. Then the weight of $m$ satisfies
$$
w(m) \geqslant \tfrac{1}{3} \tfrac{1}{2} |(C \cap S_{4/3}) \setminus T|+ \tfrac{1}{2} |(C \cap S_{4/3}) \setminus T|=\tfrac{2}{3}|(C \cap S_{4/3}) \setminus T|.
$$
The weight of a point in $S \setminus S_{4/3}$ is at least two, so we have that the number of odd secants 
$$
\sum_{x \in S} w(x) \geqslant 2(|S|-|S_0|-|S_{4/3}|-|T'|-1) + \tfrac{4}{3}|S_{4/3}|+\tfrac{2}{3}|(C \cap S_{4/3}) \setminus T|+\tfrac{8}{3} |T' \setminus S_{4/3}| \geqslant 2q-c'',
$$
for some constant $c''$.

Suppose $m \not\in S$. Then, by Lemma~\ref{gsconcur}, there are no pair of points of $(C \cap S_{4/3}) \setminus T$ collinear with $m$. Therefore, for $x \in (C \cap S_{4/3}) \setminus T$ the two points of $(S \cap \ker g_x)\setminus \{ x\}$ are of weight at least $\frac{8}{3}$ or are in $X$. 

Let $U$ be the set of points of $S \setminus S_{4/3}$ which are incident with $\ker g_x$ for $x \in C \cap S_{4/3}$. For a fixed $u \in U$, there are at most two $3$-secants which are incident with a point $x \in T$, since the $3$-secants are tangents to the conic $C$ at the points of $T$, and at most one $3$-secant which is incident with an $x \in (C \cap S_{4/3}) \setminus T$. Hence, each $u\in U$ is incident with at most three $3$-secants, incident with a point $x \in C \cap S_{4/3}$. Let $U_i$ denote the points $u$ which are incident wtih  $i$ $3$-secants, incident with a point $x \in C \cap S_{4/3}$. 

Counting pairs $(u, \ell)$ where $u \in U\cup X$ is a point incident with $\ell$, a $3$-secant incident with a point $x \in C \cap S_{4/3}$, we have
$$
2|C \cap S_{4/3}|=c'''+U_1+2U_2+3U_3,
$$
for some constant $c'''$, since $|X|$ is constant.

Observe that $u \in U_1$ does not have weight $4/3$, so its weight is at least $8/3$.

The number of odd secants is at least 
$$
\sum_{x \in S} w(x) \geqslant 2(|S|-|S_0|-|S_{4/3}|-|U_1|-|U_2|-|U_3|) + \tfrac{4}{3}|S_{4/3}|+\tfrac{8}{3}(|U_1|+|U_2|)+4 |U_3|
$$
$$
\geqslant 2q+\tfrac{2}{3}(|S_{4/3}|-|U_2|)-c'''' \geqslant 2q-c,
$$
for some constants $c'''$ and $c$.

\end{proof}

\section{Larger sets of points with few odd secants.}

Most of the ideas used for sets of size $q+2$ carry through to sets of larger size. However, the degree of the curve increases and we are unsure if this is of any use or not. We briefly describe how one might proceed with larger sets, supressing much of the detail.

Let $S$ be a set of $q+t+1$ points in $\mathrm{PG}(2,q)$, $q$ odd. For any $x \in S$, the {\em weight} of $x$ is
$$
w(x) = \sum \frac{1}{| \ell \cap S|},
$$
where the sum is over the lines $\ell$ incident with $x$ and an odd number of points of $S$.

Let $S_{t}$ be the set of points of $S$ which are incident with at most one tangent.

\begin{lemma} \label{notconstant3}
If $|S_{t}|$ is less than some constant then there are constants $c$ and $c'$ (depending on $t$), such that the number of odd secants to $S$ is at least $(2+\frac{1}{t+3})q-c$ if $t$ is odd and at least $2q-c'$ if $t$ is even.
\end{lemma}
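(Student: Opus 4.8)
The plan is to estimate the number of odd secants through the identity $o(S)=\sum_{x\in S}w(x)$ and to isolate the points of small weight. Since $|S_t|$ is bounded, all but a constant number of points $x\in S$ lie outside $S_t$, and such an $x$ carries at least two tangents, so $w(x)\geq 2$. For $t$ even this already finishes the proof: $\sum_{x\in S}w(x)\geq 2\bigl(q+t+1-|S_t|\bigr)\geq 2q-c'$.

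For $t$ odd I would first pin down exactly which $x\notin S_t$ can have $w(x)<2+\frac1{t+3}$, using two elementary counts. First, a parity count: for any $x\in S$ the lines through $x$ satisfy $\sum_{\ell\ni x}|\ell\cap S|=2q+t+1$, so the number of odd secants through $x$ is $\equiv t+1\pmod 2$, hence even when $t$ is odd. Second, a degree-of-freedom count: if $x$ has exactly two tangents, then the remaining $q-1$ lines through $x$ share the other $q+t$ points of $S$ with at least one point per line, so each of these lines has at most $t+3$ points; in particular an odd secant of size $\geq 3$ through such an $x$ has size at most $t+2$ and contributes more than $\frac1{t+3}$ to $w(x)$. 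Combining: if $x\notin S_t$ and $w(x)<2+\frac1{t+3}$, then $x$ has exactly two tangents (three or more already give $w(x)\geq 3$) and no further odd secant (a short odd secant would push $w(x)$ over the threshold, and parity rules out exactly one of them). Call these the bad points; each has weight exactly $2$. If $\beta$ is their number, then $\sum_{x\in S}w(x)\geq 2\beta+\bigl(2+\tfrac1{t+3}\bigr)\bigl(q+t+1-|S_t|-\beta\bigr)$, so for odd $t$ the lemma reduces to showing $\beta$ is bounded by a constant depending only on $t$.

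To bound $\beta$ I would run the coordinate-free, scaled version of Segre's lemma of tangents used in Section~2, now applied to the \emph{pair} of tangent lines $\ker f_x,\ker g_x$ at a bad point $x$. A bad point meets $S$ only in bisecants and even secants, and each such even secant has at most $t+3$ points, so exactly as in the proof of Theorem~\ref{main} one passes to a subset $S'$ of the bad points, of size at least $c_t\beta$ for a positive constant $c_t$ depending only on $t$, no two points of which are joined by anything but a bisecant. Taking $\{x,y,z\}\subset S'$ as a basis and multiplying the three ``product of slopes'' identities, as in Lemmas~\ref{szero} and \ref{segre}, should again yield a polynomial relation forcing all but a constant number of points of $S'$ onto a curve of bounded degree, which by B\'ezout (as in Lemma~\ref{gcdlemma} and Lemma~\ref{lotsofthem}) and the curve-analysis of Section~2 cannot contain too many of them; hence $\beta$ is constant.

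The main obstacle is precisely this last step. Unlike the $w(x)=\tfrac43$ case of Section~2, where one tangent is ``missing'' and one $3$-secant is ``doubled'' so that the product of slopes telescopes to $\pm 1$ by Wilson's identity, here the even secants through a bad point occur with higher, variable multiplicities: the product of slopes picks up an extra factor $\prod_a a^{\,k_a-1}$ whose total exponent is $t+1$ (so it is a square in $\mathbb F_q^\ast$), and this factor depends on which even secants happen to be long. Controlling it, or absorbing it into a curve whose degree grows with $t$, is the genuinely delicate part, and it is exactly the phenomenon flagged at the start of this section that ``the degree of the curve increases'' with the size of $S$.
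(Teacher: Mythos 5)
Your $t$ even case is complete and is exactly the paper's argument: every point off $S_t$ carries at least two tangents, hence $w(x)\geqslant 2$, and summing gives $2q-c'$. For $t$ odd, however, your parity count is doing real work and it exposes a problem. The paper disposes of that case in one line, asserting $w(x)\geqslant 2+\frac{1}{t+3}$ because ``$w(x)$ is minimised when $x$ is incident with two tangents, one $(t+3)$-secant''; but when $t$ is odd a $(t+3)$-secant is an \emph{even} secant and contributes nothing to $w(x)$, so a point with two tangents, one $(t+3)$-secant and $q-2$ bisecants has weight exactly $2$. These are precisely your ``bad points''. Indeed the two parities in the statement appear to be interchanged: since the number of odd secants through $x$ is congruent to $t+1$ modulo $2$, it is when $t$ is \emph{even} that a point with two tangents is forced onto a third odd secant, necessarily of size at most $t+3$, yielding $w(x)\geqslant 2+\frac{1}{t+3}$ and hence $(2+\frac{1}{t+3})q-c$; for $t$ odd only $w(x)\geqslant 2$ is available pointwise. (Compare Section~2: for $t=1$ the bound used there for points outside $S_0\cup S_{4/3}$ is $w(x)\geqslant 2$, not $\frac{9}{4}$.)

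The genuine gap in your proposal is the one you flag yourself: you reduce the $t$ odd case to showing that the number $\beta$ of weight-$2$ points (two tangents plus one long even secant) is bounded by a constant, and you do not prove this. The Segre-type programme you outline is only a sketch, and the obstruction you identify -- the variable multiplicities of the long even secants prevent the product of slopes from telescoping -- is real, so this step cannot be waved through. What your elementary counts actually establish is the lemma with the two parity cases swapped, which is also all that the paper's own proof establishes; the $t$ odd bound as printed is supported by neither argument.
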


\begin{proof}
The number of odd secants is $\sum_{x \in S} w(x)$. Put $c''=|S_t|$.  

Suppose $t$ is odd. If $x \in S \setminus S_t$ then $w(x) \geqslant 2+\frac{1}{t+3}$, since $w(x)$ is minimised when $x$ is incident with two tangents, one $(t+3)$-secant. Then 
\[\sum_{x \in S} w(x) \geq |S\setminus S_t|\left(2+\frac{1}{t+3}\right) = q\left(2+\frac{1}{t+3}\right)-(c''-t-1)\left(2+\frac{1}{t+3}\right).\]

On the other hand, suppose $t$ is even. If $x \in S \setminus S_t$ then $w(x) \geqslant 2$, since $w(x)$ is minimised when $x$ is incident with two tangents. Then 
\[\sum_{x \in S} w(x) \geq |S\setminus S_t|2= 2q-2(c''-t-1).\]

\end{proof}

We can prove that $S_t$ has some algebraic structure when $t$ is small. 

For each $x \in S_t$, let $f_x$ be a linear form whose kernel is the tangent to $S$ at $x$ (if there is no tangent to $S_t$ at $x$ then let $f_x$ be identically $1$). Let $g_x$ be the product of $t$ linear forms (or $t-1$ linear forms, if there is no tangent to $S_t$ at $x$) whose kernels correspond to the $i$-secants, where $i \geqslant 3$, counted with multiplicity, so that an $i$-secant is counted $i-2$ times.

Let $S'$ be a subset of $S_t$ with the property that two points of $S'$ are joined by a bi-secant to $S$. Fix an element $e \in S'$ and scale $f_x$ so that $f_x(e)=f_e(x)$. Similarly, scale $g_x$ so that $g_x(e)=g_e(x)$. 

As in Lemma~\ref{segre} we have the following lemma.

\begin{lemma} \label{segret}
For all $x,y \in S'$,
$$
f_x(y)g_y(x)=(-1)^{t}f_y(x)g_x(y).
$$
\end{lemma}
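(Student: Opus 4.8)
The plan is to run the proof of Lemma~\ref{segre} essentially verbatim, keeping track of the two features that are new here: $g_x$ is now a product of $t$ linear forms (or $t-1$ of them, when $x$ carries no tangent) instead of a single one, and the parity count at the end produces a sign that depends on $t$.

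First I would fix a basis of the underlying vector space with $x=(1,0,0)$, $y=(0,1,0)$ and $e=(0,0,1)$. Because any two points of $S'$ are joined by a bi-secant to $S$, each coordinate line $xy=\ker X_3$, $xe=\ker X_2$, $ye=\ker X_1$ meets $S$ in exactly two points, so every $s=(s_1,s_2,s_3)\in S\setminus\{x,y,e\}$ has all three coordinates nonzero. The line joining $x$ to such an $s$ is $\ker(s_3X_2-s_2X_3)=\ker(X_2+aX_3)$ with $a=-s_2/s_3\in\mathbb{F}_q^{*}$, and as $s$ runs over $S\setminus\{x,y,e\}$ it hits each line $\ell_a:=\ker(X_2+aX_3)$ exactly $|\ell_a\cap S|-1$ times. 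Hence
\[
\prod_{s\in S\setminus\{x,y,e\}}\frac{-s_2}{s_3}=\prod_{a\in\mathbb{F}_q^{*}}a^{\,|\ell_a\cap S|-1}=\Big(\prod_{a\in\mathbb{F}_q^{*}}a\Big)\prod_{a\in\mathbb{F}_q^{*}}a^{\,|\ell_a\cap S|-2}=-\prod_{a\in\mathbb{F}_q^{*}}a^{\,|\ell_a\cap S|-2}.
\]
The exponent $|\ell_a\cap S|-2$ is $0$ on a bi-secant, equals $i-2$ on an $i$-secant with $i\geqslant3$, and equals $-1$ on the (at most one) tangent through $x$. Since any linear form vanishing at $x$ is $h(y)X_2+h(e)X_3$ in this basis, its kernel is $\ell_a$ with $a=h(e)/h(y)$; comparing with the definition of $g_x$ and $f_x$ (the form of an $i$-secant occurring in $g_x$ with multiplicity $i-2$) we get $\prod_{a}a^{\,|\ell_a\cap S|-2}=\frac{g_x(e)}{g_x(y)}\cdot\frac{f_x(y)}{f_x(e)}$, and therefore
\[
\frac{g_x(y)}{g_x(e)}\cdot\frac{f_x(e)}{f_x(y)}\prod_{s\in S\setminus\{x,y,e\}}\frac{-s_2}{s_3}=-1.
\]
The identical argument applied to the pencils through $y$ and through $e$ gives the two analogous identities, with $\prod_s\frac{-s_3}{s_1}$ and $\prod_s\frac{-s_1}{s_2}$ in place of $\prod_s\frac{-s_2}{s_3}$.

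Then I would multiply the three identities. The three $s$-products telescope to $\prod_{s\in S\setminus\{x,y,e\}}(-1)=(-1)^{q+t-2}=(-1)^{t-1}$, using that $q$ is odd, while the scaling conventions $f_x(e)=f_e(x)$, $f_y(e)=f_e(y)$, $g_x(e)=g_e(x)$, $g_y(e)=g_e(y)$ cancel every factor that involves $e$. What survives is $\frac{g_x(y)}{g_y(x)}\cdot\frac{f_y(x)}{f_x(y)}\cdot(-1)^{t-1}=-1$, which rearranges to $f_x(y)g_y(x)=(-1)^{t}f_y(x)g_x(y)$.

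The only genuinely new point, and the step I would be most careful about, is the bookkeeping: one must check that $\prod_{a}a^{\,|\ell_a\cap S|-2}$ really does reassemble $g_x$ with the prescribed multiplicities together with the single factor $f_x$ (so that, unlike in Lemma~\ref{segre}, all the multiple secants contribute), and that the sign arising from the $q+t-2$ points of $S\setminus\{x,y,e\}$ combined with $q$ odd is $(-1)^{t-1}$ rather than $1$. The case $t=1$ recovers Lemma~\ref{segre} exactly.
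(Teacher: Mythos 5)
Your proof is correct and is exactly the argument the paper intends: the paper omits the proof of Lemma~\ref{segret}, saying only ``As in Lemma~\ref{segre}'', and you have carried out that adaptation faithfully, with the right bookkeeping for the multiplicities $i-2$ in $g_x$ and the right parity count $(-1)^{q+t-2}=(-1)^{t-1}$ over the $q+t-2$ points of $S\setminus\{x,y,e\}$.
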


Interpolating $g_x(X)$ as in Lemma 3.1 from \cite{Ball2012}, one can show that the points of $S'$ are contained in a curve of degree $t^2+5t+1$ and this can be improved to a curve of degree $12$ for $t=2$.

\bibliographystyle{plain}

  \bigskip
  
%\affiliationone{
   Simeon Ball,\\
   Departament de Matem\`atiques, \\
Universitat Polit\`ecnica de Catalunya, \\
M\`odul C3, Campus Nord,\\
c/ Jordi Girona 1-3,\\
08034 Barcelona, Spain \\
   {\tt simeon@ma4.upc.edu}
   %}
  % \affiliationtwo{
  
  \bigskip
  
Bence Csajb\'ok,\\
MTA--ELTE Geometric and Algebraic Combinatorics Research Group,\\
ELTE E\"otv\"os Lor\'and University, Budapest, Hungary\\
Department of Geometry\\
1117 Budapest, P\'azm\'any P.\ stny.\ 1/C, Hungary\\
{\tt csajbokb@cs.elte.hu}
%}\\

\end{document}